\newtheorem*{maintheorem*}{Main Theorem}
\newtheorem{theorem}{Theorem}[section]
\newtheorem*{theorem*}{Main Theorem}
\newtheorem{prop}[theorem]{Proposition}
\newtheorem{lemma}[theorem]{Lemma}
\newtheorem{cor}[theorem]{Corollary}
\theoremstyle{definition}
\newtheorem{definition}[theorem]{Definition}
\newtheorem{example}[theorem]{Example}
\numberwithin{equation}{section}
\newcommand{\pp}{\mathbb{P}}
\newcommand{\Q}{\mathbb{Q}}
\newcommand{\qq}{\mathbb{Q}}
\newcommand{\Z}{\mathbb{Z}}
\newcommand{\N}{\mathbb{N}}
\newcommand{\R}{\mathbb{R}}
\newcommand{\rr}{\mathbb{R}}
\newcommand{\nn}{\mathbb{N}}
\keywords{atomicity, atomic monoid, commutative semiring, Laurent semiring, Laurent polynomial, ACCP, bounded factorization monoid, BFM, finite factorization monoid, FFM, half-factorial monoid, HFM, length-factorial monoid, LFM, elasticity}
\subjclass[2010]{Primary: 20M13; Secondary: 20M14, 16Y60}
\begin{document}
	
	\mbox{}
	\title{Factorizations in evaluation monoids of \\ Laurent semirings}
	\author{Sophie Zhu}

	\date{\today}

	\begin{abstract}
		For $\alpha\in\rr_{>0}$, let $\nn_0[\alpha,\alpha^{-1}]$ be the semiring of real numbers $f(\alpha)$ with all $f(x) \in \nn_0[x,x^{-1}]$, where $\nn_0$ is the set of nonnegative integers and $\nn_0[x,x^{-1}]$ is the semiring of Laurent polynomials with coefficients in $\nn_0$. In this paper, we study various factorization properties of the additive structure of $\nn_0[\alpha, \alpha^{-1}]$. We characterize when $\nn_0[\alpha, \alpha^{-1}]$ is atomic. Then we characterize when $\nn_0[\alpha, \alpha^{-1}]$ satisfies the ascending chain condition on principal ideals in terms of certain well-studied factorization properties. Finally, we characterize when $\nn_0[\alpha, \alpha^{-1}]$ satisfies the unique factorization property and show that, when this is not the case, $\nn_0[\alpha, \alpha^{-1}]$ has infinite elasticity.
	\end{abstract}

	\maketitle

	\bigskip
	\section{Introduction}
	
	The purpose of this paper is to understand the (additive) factorization properties of the commutative semirings $\nn_0[\alpha, \alpha^{-1}]$ for any $\alpha \in \rr_{> 0}$. To be more precise, let $\nn_0[x,x^{-1}]$ denote the set of Laurent polynomials with coefficients in the set of nonnegative integers $\nn_0$. Since $\nn_0[x,x^{-1}]$ is closed under both addition and multiplication, it is a commutative semiring. For each $\alpha \in \rr_{> 0}$, we let $M_\alpha$ denote the additive monoid of the semiring $\nn_0[\alpha, \alpha^{-1}]$, that is,
	\[
	M_\alpha = \{f(\alpha) \mid f(x) \in \nn_0[x,x^{-1}]\}.
	\]
	It is a sub-semiring of the commutative semiring $\R_{\ge 0}$.
	For ease of notation, we shall use $M_\alpha$ in this paper to denote the additive monoid of the semiring $\nn_0[\alpha, \alpha^{-1}]$ for $\alpha\in\rr_{>0}$. Let $M$ be a cancellative and commutative (additive) monoid. A non-invertible element of $M$ is called an atom if it is not the sum of two non-invertible elements, and $M$ is atomic if every non-invertible element is a sum of atoms. 
	It is well-known that every commutative (and cancellative) monoid satisfying the ascending chain condition on principal ideals (ACCP) is atomic (see, for example, \cite[Proposition 1.1]{pC68}). As for integral domains, $M$ is called a unique factorization monoid (UFM) provided that every non-invertible element  can be written as a sum of atoms in an essentially unique way (i.e., up to order and associates). Here, we study the properties of being atomic, satisfying the ACCP, and being a UFM for the additive monoids $M_\alpha$ (with $\alpha \in \rr_{> 0}$), offering various characterizations for each of such properties in terms of atoms and (additive) factorizations.
	\smallskip
	
	Most of the results we establish here are motivated by some of the results in the recent paper~\cite{CG20} by Correa-Morris and Gotti, where the authors investigated the atomic structure of the additive monoids of the evaluation semirings $\nn_0[\alpha]$ for $\alpha \in \rr_{> 0}$, generalizing some of the results already established by Chapman et al. in~\cite{CGG20} when $\alpha$ is taken in $\qq_{> 0}$. The study of atomicity and factorizations in the setting of commutative semirings has received a great deal of attention in the last few years. For instance, Campanini and Facchini~\cite{CF19} studied the factorization structure of the multiplicative monoid of the semiring $\nn_0[x]$. In addition, Baeth et al.~\cite{BCG21} recently studied the atomic structure of both the additive and the multiplicative monoids of subsemirings of $\rr_{\ge 0}$. Finally, factorizations in certain subsemirings of $\qq_{\ge 0}$ have also been considered in~\cite{ABP21} by Albizu-Campos et al. and in~\cite{BG20} by Baeth and Gotti.
	\smallskip
	
	We begin by introducing the main terminology in Section~\ref{sec:background_1} and outlining the main known results we use later. Then, in Section~\ref{sec:atomicity}, we discuss the atomicity of the monoids $M_\alpha.$ We characterize the monoids $M_\alpha$ that are atomic as well those $M_\alpha$ that are not atomic in Theorem \ref{thm:1atomic} and Proposition \ref{prop:non-atomic characterization}, respectively. In contrast with \cite[Proposition~5.13]{CG20}, the monoid $M_\alpha$ is only finitely generated when $\alpha=1.$ In particular, if $\alpha \neq 1$ and the monoid $M_\alpha$ is atomic, then $M_\alpha$ must contain infinitely many atoms; indeed, we show in Theorem~\ref{thm:1atomic} that the atoms of $M_\alpha$ are precisely the integer powers of~$\alpha$.
	\smallskip
	
	Let $M$ be an atomic (additive) monoid. A factorization of a non-invertible element $x \in M$ is, up to order and associates, a sequence of finitely many atoms (allowing repetitions) with sum $x$, and the number of atoms in such a sequence (counting repetitions) is called the the length of the factorization. A non-invertible element in $M$ may have distinct factorizations (even infinitely many). For a non-invertible element $x \in M$, we let $\mathsf{Z}(x)$ and $\mathsf{L}(x)$ denote the set of factorizations and factorization lengths of $x$, respectively. Following Anderson et al.~\cite{AAZ90} and Halter-Koch~\cite{fHK92}, we say that the monoid~$M$ is an FFM (resp., a BFM) provided that $\mathsf{Z}(x)$ (resp., $\mathsf{L}(x)$) is finite for all non-invertible $x \in M$. The property of being a BFM was first studied back in 1949 by Neumann \cite{bN66} in connection to the ACCP. Note that every FFM is a BFM. In Section~\ref{sec:ACCP}, we prove that the conditions of satisfying the ACCP, being a BFM, and being an FFM are equivalent for any monoid~$M_\alpha$ (see Theorem \ref{thm:ffm}). In addition, we construct monoids $M_\alpha$ that are FFMs but not UFMs (see Subsection~\ref{sub:ffm not ufm}).
	\smallskip
	
	In Section~\ref{sec:factoriality}, we identify the monoids $M_\alpha$ that are UFMs. Following Zaks~\cite{aZ80}, we say that~$M$ is a half-factorial monoid (HFM) if $\mathsf{L}(x)$ is a singleton for every $x \in M$. The property of being an HFM was first considered by Carlitz~\cite{lC60} in the context of algebraic number theory to characterize rings of integers with class number two. Following Chapman et al.~\cite{CCGS21}, we say that $M$ is called a length-factorial monoid (LFM) if for every $x \in M$,  not two factorizations in $\mathsf{Z}(x)$ have the same length. Additionally, in Section~\ref{sec:factoriality}, we prove that the conditions of being a UFM, an HFM, and an LFM are equivalent for any monoid $M_\alpha$. 
	\smallskip
	
	It is not hard to argue that classes satisfying the atomic properties we have just defined are somehow nested, as indicated by the following chain of implications in Diagram~\eqref{eq:atomic chain}. In Section~\ref{sec:factoriality}, we produce a diagram (Diagram~\eqref{eq:refined_chain}) specialized for the class of all monoids $M_\alpha$ that refines Diagram~\eqref{eq:atomic chain}.
	
	\begin{equation} \label{eq:atomic chain}
		\textbf{UFM} \ \Rightarrow \ [\textbf{FFM, HFM}] \ \Rightarrow \ \textbf{BFM} \ \Rightarrow \ \textbf{ACCP} \ \Rightarrow \ \textbf{atomicity}
	\end{equation}
	\smallskip
	
	The elasticity of a monoid is an arithmetic statistic that measures how much a monoid deviates from being an HFM. The elasticity was first considered by Steffan~\cite{jlS86} and Valenza~\cite{rV90} back in the eighties to understand how far from being a UFD is a Dedekind domain or a ring of integers, respectively. Since then the elasticity has become probably the most studied arithmetic invariant to measure non-uniqueness of factorizations (see~\cite{qZ19} by Zhong, and references therein). We conclude this paper with showing that $M_\alpha$ has infinite elasticity when it is not an HFM (see Proposition \ref{prop:elasticity}), which means that either $M_\alpha$ is an HFM or it is as far from being an HFM as a monoid can possibly be.

	\bigskip
	\section{Background}
	\smallskip

	\subsection{General Notation}
	\label{sec:background_1}
	\smallskip
	
	We let $\pp$, $\nn$, and $\nn_0$ denote the set of primes, positive integers, and nonnegative integers, respectively. If $X$ is a subset of $\rr$ and $r$ is a real number, we let $X_{\ge r}$ denote the set $\{x \in X \mid x \ge r\}$. Similarly, we use the notations $X_{> r}, X_{\le r}$, and $X_{< r}$. For a positive rational $q$, the positive integers $a$ and $b$ with $q = a/b$ and $\gcd(a,b) = 1$ are denoted by $\mathsf{n}(q)$ and $\mathsf{d}(q)$, respectively. 
	\smallskip
	
	Given a monic polynomial $f(x)\in\mathbb{Q}[x]$, let $\ell$ be the smallest positive integer such that $\ell \cdot f(x) \in \mathbb{Z}[x]$. Then there exist unique $p(x), q(x) \in \mathbb{N}_0[x]$ such that $\ell f(x) = p(x) - q(x)$ and that $p(x)$ and $q(x)$ share no monomials of the same degree (that is, the greatest common divisor of $p(x)$ and $q(x)$ in the free commutative monoid $(\N_0[x],+)$ is $0$). We call the pair $(p(x), q(x))$ the \emph{minimal pair} of $f(x)$. In addition, if $\alpha$ is a real algebraic number, the \emph{minimal pair of $\alpha$} is defined to be the minimal pair of its minimal polynomial over $\Q$.

	\medskip
	\subsection{Monoids}
	\smallskip
	
	A \emph{monoid} is a cancellative and commutative semigroup with an identity element. Monoids here will be written additively, unless we say otherwise. Let $M$ be a monoid. An element $x \in M$ is called \emph{invertible} (or a \emph{unit}) if there exists $y \in M$ such that $x+y = 0$. We tacitly assume that $M$ (and every monoid we deal with here) is \emph{reduced}; that is, its only invertible element is $0$. We set $M^\bullet = M \setminus \{0\}$. For a subset $S$ of $M$, we let $\langle S \rangle$ denote the submonoid of $M$ generated by $S$, i.e., the intersection of all submonoids of $M$ containing $S$. We say that a monoid is \emph{finitely generated} if it can be generated by a finite set. A nonzero element $a \in M$ is called an \emph{atom} if whenever $a = x+y$ for some $x,y \in M$ either $x = 0$ or $y = 0$. It is customary to let $\mathcal{A}(M)$ denote the set consisting of all atoms of $M$, and we do so. If $\mathcal{A}(M)$ is empty, $M$ is said to be \emph{antimatter}. The monoids we are mostly interested in this paper are atomic.
	
	\begin{definition}
		An (additive) monoid is \emph{atomic} if every nonzero element can be written as a sum of atoms.
	\end{definition}
	
	If $I$ is a subset of $M$, then $I$ is called an \emph{ideal} provided that $I + M = I$ (or, equivalently, $I + M \subseteq I$). Every subset of $M$ of the form $x + M$, where $x \in M$, is an ideal and is called a \emph{principal} ideal. The monoid $M$ satisfies the \emph{ascending chain condition on principal ideals} (\emph{ACCP}) if every increasing sequence (under inclusion) of principal ideals of $M$ becomes stationary from one point on. It is well known that every monoid satisfying the ACCP is atomic (see \cite[Proposition~1.1.4]{GH06}). The converse does not hold: for instance, the additive submonoid $\langle (\frac{2}{3})^n \mid n \in \nn \rangle$ of $\qq$ is an atomic monoid that does not satisfy the ACCP \cite[Corollary~4.4]{CGG21}.
	\smallskip
	
	\subsection{Factorizations} Assume now that $M$ is atomic. Let $\mathsf{Z}(M)$ denote the free (commutative) monoid on the set $\mathcal{A}(M)$. For each $x \in M$, we let $\mathsf{Z}(x)$ denote the set of all formal sums $z := a_1 +  \cdots + a_\ell \in \mathsf{Z}(M)$ with $a_1, \dots, a_\ell \in \mathcal{A}(M)$ such that $a_1 + \dots + a_\ell = x$ in $M$. In this case, $\ell$ is called the \emph{length} of $z$ and is denoted by~$|z|$. For each $x \in M$, we set $\mathsf{L}(x) := \{ |z| \mid z \in \mathsf{Z}(x) \}$. The sets $\mathsf{Z}(x)$ and $\mathsf{L}(x)$ play an important role in factorization theory (see~\cite{aG16}). Note that~$M$ is atomic if and only if $\mathsf{Z}(x)$ is nonempty for all $x \in M^\bullet$. 
	\smallskip
	
	The monoid $M$ is called a \emph{bounded factorization monoid} (BFM) if $\mathsf{L}(x)$ is finite for all $x \in M$. Every BFM satisfies the ACCP \cite[Corollary~1.3.3]{GH06}, but the converse does not hold: $\langle 1/p \mid p \in \pp \rangle$ satisfies the ACCP but is not a BFM \cite[Corollary~4.6]{CGG21}. The monoid $M$ is called a \emph{half-factorial monoid} (HFM) if $|\mathsf{L}(x)| = 1$ for all $x \in M^\bullet$. Observe that every HFM is a BFM. The monoid $M$ is called a \emph{finite factorization monoid}) (FFM) if $\mathsf{Z}(x)$ is finite for all $x \in M^\bullet$. Every finitely generated monoid is an FFM \cite[Corollary~3.7]{AG21}. Note that every FFM is a BFM; however, $\{0\} \cup \qq_{\ge 1}$ is a BFM that is not an FFM \cite[Example~4.10]{CGG21}. In addition, one can see that $\langle 2,3 \rangle$ is an FFM that is not an HFM. On the other hand, there are HFMs that are not FFMs; this is the case of the additive monoid $\{(0,0)\} \cup (\Z \times \nn)$ (see \cite[Example~3.9]{AG21}). Finally,~$M$ is called a \emph{unique factorization monoid} (UFM) provided that $|\mathsf{Z}(x)| = 1$ for all $x \in M^\bullet$. Every UFM is, by definition, both an HFM and an FFM. Then we see that each implication in Diagram~\eqref{eq:atomic chain} holds and that such a diagram does not support, in general, any additional implication.

	\bigskip
	\section{Atomicity}
	\label{sec:atomicity}
	
	In this section, we study the atomicity of the additive monoids $M_\alpha$, where $M_\alpha =\nn_0[\alpha,\alpha^{-1}]$ for $\alpha\in\rr_{>0}$. We characterize the monoids $M_\alpha$ that are atomic, and then we give examples of monoids $M_\alpha$ that are atomic but do not satisfy the ACCP. The next theorem, which gives a simple characterization of the monoids $M_\alpha$ that are atomic, also provides an explicit description of the set of atoms of $M_\alpha$. Moreover, it gives a necessary condition for the atomicity of $M_\alpha$ when $\alpha$ is algebraic. For any algebraic number $\alpha$ with minimal polynomial $m(x)\in\Q[x]$, the polynomial $\ell \cdot m(x)$ is a primitive polynomial in $\Z[x]$ for a unique $\ell \in \nn$, so $\ell\cdot m(x)=p(x)-q(x)$ for unique $p(x),q(x)\in\N_0[x]$ that do not share monomials of equal degrees. We call $(p(x),q(x))$ the minimal pair of $\alpha$ (see Section \ref{sec:background_1}).
	
	\begin{theorem}\label{thm:1atomic}
		For each $\alpha \in \mathbb{R}_{>0},$ the following statements are equivalent.
		\begin{enumerate}
			\item[(a)] $1 \in \mathcal{A}(M_\alpha)$.
			\smallskip
			
			\item[(b)] $\mathcal{A}(M_\alpha) = \{\alpha^n \mid n \in \mathbb{Z}\}$.
			\smallskip
			
			\item[(c)] $M_\alpha$ is atomic.
		\end{enumerate}
		Suppose that $\alpha \in \mathbb{R}_{>0} \setminus \{1\}$ is an algebraic number. If $M_\alpha$ is atomic, then neither of the two components in the minimal pair of $\alpha$ is a  monic monomial.
	\end{theorem}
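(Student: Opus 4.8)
The plan is to establish the equivalence (a) $\Leftrightarrow$ (b) $\Leftrightarrow$ (c) first, and then derive the algebraic consequence as a corollary of the structure uncovered in proving (a) $\Rightarrow$ (b).

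First I would observe that $M_\alpha$ is generated as a monoid by $\{\alpha^n \mid n \in \Z\}$, since every element is $f(\alpha)$ for some Laurent polynomial $f$ with nonnegative integer coefficients, hence a finite $\nn_0$-linear combination of powers of $\alpha$. For the implication (a) $\Rightarrow$ (b): assuming $1 \in \mathcal{A}(M_\alpha)$, I would show each $\alpha^n$ is an atom by using the multiplicative (semiring) structure — multiplication by $\alpha^n$ is an additive automorphism of $M_\alpha$ (its inverse being multiplication by $\alpha^{-n}$), and automorphisms carry atoms to atoms, so $1 \in \mathcal{A}(M_\alpha)$ forces $\alpha^n \in \mathcal{A}(M_\alpha)$ for all $n \in \Z$. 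Conversely, no other element can be an atom: any $f(\alpha)$ with $f$ having more than one term (or a coefficient exceeding $1$) is visibly a sum of smaller elements $\alpha^n \in M_\alpha$, so it is not an atom. The implication (b) $\Rightarrow$ (c) is then immediate since $\{\alpha^n \mid n \in \Z\}$ generates $M_\alpha$ and consists of atoms, so every nonzero element is a sum of atoms. The implication (c) $\Rightarrow$ (a) requires an argument that $1$ cannot be written as a sum of atoms unless it is itself an atom: if $M_\alpha$ is atomic then $1 = a_1 + \cdots + a_k$ with each $a_i$ an atom; since each $a_i \le 1$ and lies in $M_\alpha \subseteq \R_{\ge 0}$, and since atoms of $M_\alpha$ (being elements that are not sums of two nonzero elements) are rather constrained, I expect one can force $k=1$. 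The cleanest route is probably to show directly that (c) implies every $\alpha^n$ is an atom — atomicity of $M_\alpha$ combined with the automorphism action shows the atom set is invariant under multiplication by $\alpha^{\pm 1}$, and then a minimality/size argument on the smallest atom $\le 1$ pins down $1$ itself.

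For the final statement, suppose $\alpha \in \R_{>0} \setminus\{1\}$ is algebraic with minimal pair $(p(x), q(x))$, so that $p(\alpha) = q(\alpha)$ in $M_\alpha$, and $p, q$ share no monomials of equal degree. Assume for contradiction that, say, $p(x) = x^m$ is a monic monomial (the case $q(x) = x^m$ is symmetric). Then $\alpha^m = q(\alpha)$, where $q(x) \in \nn_0[x]$ has no $x^m$ term. If $q$ has at least two terms or a coefficient $\ge 2$, this exhibits $\alpha^m$ as a sum of strictly smaller nonzero elements of $M_\alpha$, contradicting (b) — hence contradicting atomicity via the theorem's equivalence. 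The remaining degenerate cases ($q(x) = c\, x^j$ a single monomial, or $q = 0$) would force $\alpha$ to satisfy $x^m = c x^j$ or $x^m = 0$, making $\alpha$ a root of unity or $\alpha = 0$; combined with $\alpha \in \R_{>0}\setminus\{1\}$ and $\alpha$ being a genuine algebraic number with that minimal polynomial, these are excluded (a real root of unity is $\pm 1$, and $-1 \notin \R_{>0}$). Thus $p$ is not a monic monomial, and symmetrically neither is $q$.

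The main obstacle I anticipate is the implication (c) $\Rightarrow$ (a) — equivalently, ruling out the existence of some exotic atom strictly below $1$ that would allow $1$ to factor nontrivially while $M_\alpha$ is still atomic. One must use the specific arithmetic of $\nn_0[\alpha,\alpha^{-1}]$ here, not just the monoid structure: the key leverage is again that multiplication by $\alpha$ is an automorphism, so if $1 = a_1 + \cdots + a_k$ nontrivially then scaling shows $\alpha^n$ also factors nontrivially for every $n$, and one then needs to derive a contradiction with atomicity — perhaps by producing an infinite strictly descending sequence of factorization refinements, or by a valuation/archimedean-size argument showing the atoms below $1$ cannot sum back to exactly $1$. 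Getting this step to work cleanly, likely by a careful induction on the "complexity" (number of monomials and coefficient sizes) of a representing Laurent polynomial, is where the real content lies.
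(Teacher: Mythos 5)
Your treatment of (a) $\Rightarrow$ (b), of (b) $\Rightarrow$ (c), and of the final statement about the minimal pair is essentially correct and runs along the same lines as the paper's proof: the paper rescales by powers of $\alpha$ by hand where you invoke the automorphism given by multiplication by $\alpha^{\pm 1}$, and it dispatches your ``degenerate'' single-monomial cases more cleanly by noting that if the other component were $x^m$ or $cx^j$ with $c=1$ then $\alpha>0$, $\alpha\neq 1$ would force a shared monomial of equal degree, which the definition of the minimal pair forbids (the case $c\ge 2$ is already your main branch). The genuine gap is (c) $\Rightarrow$ (a): you do not prove it. You flag it as the main obstacle, and the strategies you sketch --- forcing $k=1$ in a decomposition $1=a_1+\cdots+a_k$, a minimality argument on a smallest atom below $1$, descending chains of refinements, a valuation or archimedean-size argument, induction on the complexity of a representing Laurent polynomial --- are left entirely speculative, so as written the proposal is incomplete at exactly the step you yourself identify as carrying the real content.

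What you missed is that no such machinery is needed; indeed you had already assembled all the required pieces. You observed, independently of hypothesis (a), that any element admitting a representation $f(\alpha)$ with coefficient sum at least $2$ is a sum of two nonzero elements of $M_\alpha$, hence $\mathcal{A}(M_\alpha)\subseteq\{\alpha^n\mid n\in\Z\}$; and you observed that multiplication by $\alpha^k$ is a monoid automorphism of $M_\alpha$, hence permutes the atoms. So if $M_\alpha$ is atomic, then (being nonzero) it has at least one atom, necessarily some $\alpha^n$, and applying the automorphism ``multiply by $\alpha^{-n}$'' shows that $1$ is an atom, which is (a). Equivalently, and this is the paper's argument in contrapositive form: if $1\notin\mathcal{A}(M_\alpha)$, write $1=\sum_{i\in S}c_i\alpha^i$ with $\sum_{i\in S}c_i\ge 2$, multiply by $\alpha^k$ to see that no $\alpha^k$ is an atom, and conclude that $\mathcal{A}(M_\alpha)=\emptyset$, so $M_\alpha$ is not atomic. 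In particular, your worry about ``exotic atoms strictly below $1$'' is vacuous, since you had already shown every atom is a power of $\alpha$.
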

	
	\begin{proof}
		(a) $\Rightarrow$ (b): Suppose that $\mathcal{A}(M_\alpha) \neq \{\alpha^n \mid n \in \mathbb{Z}\}$. Then there exists $n \in \mathbb{Z}$ such that $\alpha^n \not\in \mathcal{A}(M_\alpha)$ and, therefore, there exists a finite set $S \subset \mathbb{Z}$ such that $\alpha^n = \sum_{i \in S} c_i\alpha^i$ for some coefficients $c_i \in \mathbb{N}$ for each $i \in S$ such that $\sum_{i\in S} c_i \geq 2$. Dividing by $\alpha^n$ gives $1 = \sum_{i\in S} c_i\alpha^{i-n}.$ Thus, $1 \not\in \mathcal{A}(M_\alpha),$ as desired. 
		\smallskip
		
		(b) $\Rightarrow$ (c): This holds by the definition of $M_\alpha$.
		\smallskip
		
		(c) $\Rightarrow$ (a): Suppose $1\not\in \mathcal{A}(M_\alpha).$  Then there exists a finite set $S \subset \mathbb{Z}$ and coefficients $c_i \in \mathbb{N}$ for each $i \in S$ such that $\sum_{i\in S} c_i \geq 2$ and $1 = \sum_{i \in S} c_i\alpha^i.$ For each $k \in \Z$, we can multiplying both sides of $1 = \sum_{i \in S} c_i\alpha^i$ by $\alpha^k$ to obtain the equality $\alpha^k = \sum_{i\in S} c_i\alpha^{i+k}.$ Thus, $\alpha^k$ is not an atom for any $k \in \mathbb{Z}$, which implies that $M_\alpha$ has no atoms and, therefore, that it is not atomic.
		\smallskip
		
		Assume now that $\alpha$ is a positive algebraic real number such that $\alpha \neq 1$. Let $m(x)$ and $(p(x), q(x))$ be the minimal polynomial and the minimal pair of $\alpha$, respectively. Suppose, by way of contradiction, that either $p(x)$ or $q(x)$ is a monic monomial. We can say, without loss of generality, that $q(x) = x^n$ for some $n \in \mathbb{N}_0$. Thus, $p(\alpha) - \alpha^n = p(\alpha) - q(\alpha) = \ell m(\alpha) = 0$ for some $\ell \in \N,$ so $p(\alpha) = \alpha^n$. Because $\alpha \neq 1$, we see that $p(x)$ must be the sum of at least two nonzero monomials (not necessarily distinct). Consequently, $\alpha^n \notin \mathcal{A}(M_\alpha)$. Therefore, $M_\alpha$ is not atomic in light of the characterizations established above, which yields the desired contradiction.
	\end{proof}
	
	It is worth mentioning that, as a direct consequence of Theorem~\ref{thm:1atomic}, one obtains that every monoid $M_\alpha$ satisfies $|\mathcal{A}(M_\alpha)| \in \{0, \infty\}$ and also that $M_\alpha$ is either atomic or antimatter. In addition, when $\alpha$ is transcendental, $M_\alpha$ is atomic, as we now illustrate.
	
	\begin{cor}
		If $\alpha \in \rr_{> 0}$ is transcendental, then $M_\alpha$ is atomic.
	\end{cor}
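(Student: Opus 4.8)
The plan is to invoke the equivalence (a)$\,\Leftrightarrow\,$(c) from Theorem~\ref{thm:1atomic}: since $M_\alpha$ is atomic precisely when $1 \in \mathcal{A}(M_\alpha)$, it suffices to show that $1$ is an atom of $M_\alpha$ whenever $\alpha \in \rr_{>0}$ is transcendental. I would proceed by contradiction, assuming that $1 \notin \mathcal{A}(M_\alpha)$.

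Under this assumption, the definition of an atom together with the fact that $M_\alpha$ is reduced yields a finite set $S \subset \Z$ and coefficients $c_i \in \nn$ (for $i \in S$) with $\sum_{i \in S} c_i \geq 2$ such that $1 = \sum_{i \in S} c_i\alpha^i$. Rewriting this relation as $\sum_{i \in S} c_i\alpha^i - 1 = 0$, I would consider the Laurent polynomial $g(x) := \sum_{i \in S} c_i x^i - 1 \in \Z[x,x^{-1}]$, which by construction satisfies $g(\alpha) = 0$. Picking $N \in \nn$ large enough that $N + i \geq 0$ for every $i \in S$, the polynomial $x^N g(x)$ lies in $\Z[x]$ and still vanishes at $\alpha$.

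The only point requiring a moment of care is that $g$ is not the zero Laurent polynomial: were $g = 0$, then $\sum_{i \in S} c_i x^i = 1$ in $\Z[x,x^{-1}]$, which forces $S = \{0\}$ and $c_0 = 1$, contradicting $\sum_{i \in S} c_i \geq 2$. Hence $x^N g(x)$ is a nonzero element of $\Z[x]$ having $\alpha$ as a root, contradicting the transcendence of $\alpha$. This establishes $1 \in \mathcal{A}(M_\alpha)$ and therefore, by Theorem~\ref{thm:1atomic}, that $M_\alpha$ is atomic. There is no substantive obstacle in this argument; the entire content is the observation that a nontrivial additive relation among integer powers of $\alpha$ would produce an algebraic relation satisfied by $\alpha$.
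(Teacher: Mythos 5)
Your argument is correct and is essentially the paper's own proof: both reduce to showing that a relation $1 = \sum_{i \in S} c_i \alpha^i$ must be trivial by clearing denominators to produce a polynomial in $\Z[x]$ vanishing at $\alpha$, which transcendence forces to be zero, whence $1 \in \mathcal{A}(M_\alpha)$ and Theorem~\ref{thm:1atomic} applies. The only cosmetic difference is that you phrase it as a contradiction starting from $1 \notin \mathcal{A}(M_\alpha)$, while the paper argues directly that any such representation has $S = \{0\}$ and $c_0 = 1$.
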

	
	\begin{proof}
		Suppose that $1 = \sum_{i \in S} c_i \alpha^i$ for a finite set $S \subseteq \Z$ and coefficients $c_i \in \nn_0$ for every $i \in S$. Then $\alpha$ would be a root of the polynomial $f(x) := x^m - \sum_{i \in S} c_i x^{i+m} \in \Z[x]$, where $m = -\min (\{0\} \cup S)$. Since $\alpha$ is transcendental, $f(x)$ is the zero polynomial and, therefore, $S = \{0\}$ and $c_0 = 1$. Hence $1 \in \mathcal{A}(M_\alpha)$, and $M_\alpha$ is atomic by Theorem~\ref{thm:1atomic}.
	\end{proof}
	
	It is worth emphasizing that the necessary condition in Theorem~\ref{thm:1atomic} is not sufficient; this is illustrated in the following example.
	
	\begin{example}  \label{ex:non-atomic monoid M_alpha}
		Consider the monic polynomial $m(x) = x^3 - 2x^2 + 3x - 7$. Because $m(x)$ has no integer roots, it follows from Gauss's lemma that $m(x)$ is irreducible in $\Q[x]$. On the other hand, $m(2) = -1$ and $m(3) = 11$, the polynomial $m(x)$ has a positive root $\alpha$ in the interval $(2,3)$. Consider the monoid $M_\alpha$. As $m(x)(x + 2) = x^4 - x^2 - x - 14$, we see that $\alpha$ is a root of the polynomial $x^4 - x^2 - x - 14$, so $\alpha^4 = \alpha^2 + \alpha + 14$. Hence $\alpha$ is not an atom of $M_\alpha$, and it follows from the characterization in Theorem~\ref{thm:1atomic} that $M_\alpha$ is not atomic. However, none of the polynomials in the minimal pair $(x^3 + 3x, 2x^2 + 7)$ of $\alpha$ are monic monomials. Therefore we conclude that the necessary condition in Theorem~\ref{thm:1atomic} is not sufficient.
	\end{example}
	
	If $\alpha = 1$, then $M_\alpha = \nn_0$, which is atomic. On the other hand, if $\alpha \in \nn_{\ge 2}$ (or if $\alpha = 1/n$ for some $n \in \nn_{\ge 2}$), then $1$ is the sum of $\alpha$ copies of $\alpha^{-1}$ (resp., $\alpha^{-1}$ copies of $\alpha$) and, therefore $1 \notin \mathcal{A}(M_\alpha)$, and so Theorem~\ref{thm:1atomic} ensures that $M_\alpha$ is not atomic. In addition, we have exhibited in Example~\ref{ex:non-atomic monoid M_alpha} a monoid $M_\alpha$ that is not atomic for some $\alpha \in \rr_{> 0} \setminus \qq$. We now characterize the monoids $M_\alpha$ that are not atomic.
	
	\begin{prop} \label{prop:non-atomic characterization}
		For $\alpha \in \rr_{> 0}$ with $\alpha \neq 1$, the following statements are equivalent.
		\begin{enumerate}
			\item[(a)] $M_\alpha$ is not atomic.
			\smallskip
			
			\item[(b)] $(\nn_0[\alpha],+)$ is antimatter or finitely generated.
		\end{enumerate}
	\end{prop}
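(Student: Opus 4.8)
The plan is to derive both implications from Theorem~\ref{thm:1atomic}, which tells us that $M_\alpha$ fails to be atomic exactly when $1\notin\mathcal{A}(M_\alpha)$, i.e., exactly when there are a finite set $S\subset\mathbb{Z}$ and coefficients $c_i\in\mathbb{N}$ (for $i\in S$) with $\sum_{i\in S}c_i\ge 2$ and $1=\sum_{i\in S}c_i\alpha^i$. I would first record two elementary facts about $\mathbb{N}_0[\alpha]$ to be used throughout: as an additive monoid it is generated by $\{\alpha^n\mid n\in\mathbb{N}_0\}$, so $\mathcal{A}(\mathbb{N}_0[\alpha])\subseteq\{\alpha^n\mid n\in\mathbb{N}_0\}$; and $\mathbb{N}_0[\alpha]$ is a subsemiring of $\mathbb{R}_{\ge 0}$, hence closed under multiplication and contained in $M_\alpha$. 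Moreover, since $\alpha\neq 1$ the powers $\alpha^n$ are pairwise distinct, so $\{\alpha^n\mid n\in\mathbb{N}_0\}$ is an infinite subset of $\mathbb{N}_0[\alpha]$.

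The implication (b) $\Rightarrow$ (a) should be short. If $\mathbb{N}_0[\alpha]$ is antimatter then $1\notin\mathcal{A}(\mathbb{N}_0[\alpha])$, so $1=\beta+\gamma$ with $\beta,\gamma\in\mathbb{N}_0[\alpha]\setminus\{0\}\subseteq M_\alpha\setminus\{0\}$, and Theorem~\ref{thm:1atomic} yields that $M_\alpha$ is not atomic. If instead $\mathbb{N}_0[\alpha]$ is finitely generated, then $\mathcal{A}(\mathbb{N}_0[\alpha])$ is finite (each atom must be one of the generators), so some $\alpha^n$ fails to be an atom of $\mathbb{N}_0[\alpha]$; writing $\alpha^n=\beta+\gamma$ with $\beta,\gamma\in\mathbb{N}_0[\alpha]\setminus\{0\}$ and multiplying through by $\alpha^{-n}\in M_\alpha$ again gives $1\notin\mathcal{A}(M_\alpha)$.

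The content is in (a) $\Rightarrow$ (b). Starting from a relation $1=\sum_{i\in S}c_i\alpha^i$ with $\sum_{i\in S}c_i\ge 2$, I would split on the sign of $\alpha-1$, and the crux --- the step I expect to be the main obstacle --- is to see that this sign forces where $S$ can be supported. If $\alpha<1$, then every $i\in S$ must be positive: a nonpositive index $i$ gives $\alpha^i\ge 1$, so the nonnegative term $c_i\alpha^i$ is already at least $1$, which by nonnegativity of all terms forces the relation to be the trivial one. Hence $1=\sum_{j\in S}c_j\alpha^j$ with all $j\ge 1$, so $\alpha^{-1}=\sum_{j\in S}c_j\alpha^{j-1}\in\mathbb{N}_0[\alpha]$; by multiplicative closure this makes $\mathbb{N}_0[\alpha]=M_\alpha$, hence non-atomic, and multiplying the relation by $\alpha^n$ shows that no $\alpha^n$ is an atom, so $\mathcal{A}(\mathbb{N}_0[\alpha])=\emptyset$ and $\mathbb{N}_0[\alpha]$ is antimatter. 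If $\alpha>1$, then symmetrically every $i\in S$ must be negative (an index $i\ge 1$ overshoots, and $i=0$ forces triviality); setting $m:=-\min S\ge 1$ and multiplying the relation by $\alpha^m$ moves all exponents into $\{0,\dots,m-1\}$, so $\alpha^m\in\langle\alpha^0,\dots,\alpha^{m-1}\rangle$, and a short induction (write $\alpha^k=\alpha\cdot\alpha^{k-1}$, expand in the generators, and reduce the resulting $\alpha^m$) shows $\mathbb{N}_0[\alpha]=\langle\alpha^0,\dots,\alpha^{m-1}\rangle$ is finitely generated.

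In summary, the only real obstacle is establishing the dichotomy: a nontrivial additive representation of $1$ is supported entirely on positive powers of $\alpha$ when $\alpha<1$ and entirely on negative powers when $\alpha>1$; tracing each alternative then collapses $\mathbb{N}_0[\alpha]$ onto all of $M_\alpha$ (forcing it to be antimatter) or onto a finitely generated truncation. Everything else reduces to Theorem~\ref{thm:1atomic} and the two elementary facts about $\mathbb{N}_0[\alpha]$ noted at the outset.
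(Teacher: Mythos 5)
Your proof is correct, and while its skeleton matches the paper's (split on $\alpha<1$ versus $\alpha>1$, and funnel everything through Theorem~\ref{thm:1atomic} via a representation $1=\sum_{i\in S}c_i\alpha^i$ with $\sum_{i\in S}c_i\ge 2$), the execution is genuinely more self-contained. The paper outsources the key steps to external results: in the case $\alpha<1$ it quotes \cite[Theorem~4.1]{CG20} to pass from ``$1$ is not an atom of $\nn_0[\alpha]$'' to ``$\nn_0[\alpha]$ is antimatter''; in the case $\alpha>1$ it invokes \cite[Proposition~4.5]{fG19} for atomicity of $\nn_0[\alpha]$ and again \cite[Theorem~4.1]{CG20} to bound the atoms and deduce finite generation; and in (b)~$\Rightarrow$~(a) it uses \cite[Proposition~2.7.8]{GH06} together with \cite[Theorem~4.1]{CG20} to produce the relation $\alpha^n=\sum_{k=0}^{n-1}c_k\alpha^k$. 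You replace all of this with direct arguments: the positivity dichotomy (a nontrivial representation of $1$ is supported on positive powers when $\alpha<1$ and on negative powers when $\alpha>1$, since a term $c_i\alpha^i\ge 1$ forces the trivial relation), explicit multiplication by $\alpha^n$ to show no candidate atom $\alpha^n$ survives (antimatter), an induction reducing every $\alpha^k$ to $\langle 1,\alpha,\dots,\alpha^{m-1}\rangle$ (finite generation, with no need for $\nn_0[\alpha]$ to be atomic), and, for (b)~$\Rightarrow$~(a), only the observation that atoms of a reduced monoid lie in any generating set, so a finitely generated $\nn_0[\alpha]$ has some non-atom $\alpha^n$, which after dividing by $\alpha^n$ kills $1\in\mathcal{A}(M_\alpha)$. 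What the paper's route buys is brevity by leaning on the structure theory of $\nn_0[\alpha]$ already developed in \cite{CG20}; what your route buys is independence from those citations and a cleaner view of the one real mechanism at work, namely the sign of $\alpha-1$ forcing where the support of a representation of $1$ can sit.
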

	
	\begin{proof}
		(a) $\Rightarrow$ (b): Suppose that $M_\alpha$ is not atomic. Then $\alpha$ is algebraic as otherwise $M_\alpha$ would be a free commutative monoid, which is atomic. We consider the following two cases.
		\smallskip
		
		\textit{Case 1:} $\alpha < 1$. Since $M_\alpha$ is not atomic, $1 \notin \mathcal{A}(M_\alpha)$ by Theorem~\ref{thm:1atomic}, so we can write $1 = \sum_{i=1}^n c_i \alpha^i$ for some $n \in \nn$ and $c_1, \dots, c_n \in \nn_0$ (here, we use that $\alpha  < 1$). Then $1$ is not an atom of the additive monoid $\nn_0[\alpha]$, and it follows from \cite[Theorem~4.1]{CG20} that $\nn_0[\alpha]$ is antimatter.
		\smallskip
		
		\textit{Case 2:} $\alpha > 1$. Since $0$ is not a limit point of $\nn_0[\alpha]^\bullet$ (because $\alpha > 1$), it follows from \cite[Proposition~4.5]{fG19} that $\nn_0[\alpha]$ is atomic. As in the case already considered, the fact that $M_\alpha$ is not atomic allows us to write $1 = \sum_{i=1}^n c_i  \alpha^{-i}$ for some $n \in \nn$ and $c_1, \dots, c_n \in \nn_0$ (here, we use that $\alpha > 1$). Therefore $\alpha^n = \sum_{i=1}^n c_i \alpha^{n-i}$. As $\nn_0[\alpha]$ is an atomic monoid, the inclusion $\mathcal{A}(\nn_0[\alpha]) \subseteq \{\alpha^k \mid k \in \{0, \dots,  n-1 \} \}$ holds by \cite[Theorem~4.1]{CG20}. Thus, $\nn_0[\alpha]$ is finitely generated.
		\smallskip
		
		(b) $\Rightarrow$ (a): Note that $\alpha$ is algebraic, for otherwise, $\N_0[\alpha]$ would be a free commutative monoid on a countable basis, which is neither antimatter nor finitely generated. Suppose first that the additive monoid $\nn_0[\alpha]$ is antimatter. Since the set $\{\alpha^n \mid n \in \nn_0\}$ generates $\nn_0[\alpha]$, the equality $1 = \sum_{i=1}^k c_i \alpha^i$ holds for some $k \in \nn$ and $c_1, \dots, c_k \in \nn_0$. Hence $1 \notin \mathcal{A}(M_\alpha)$, and so $M_\alpha$ is not atomic by Theorem~\ref{thm:1atomic}.
		\smallskip
		
		
		Finally, suppose that the additive monoid $\N_0[\alpha]$ is finitely generated. Then $\nn_0[\alpha]$ is atomic by \cite[Proposition~2.7.8]{GH06}, and it follows from \cite[Theorem~4.1]{CG20} that
		\[
		\mathcal{A}(\nn_0[\alpha]) = \{\alpha^k \mid k \in \{ 0, \dots, n-1 \} \}
		\]
		for some $n \in \nn$. Since $\alpha \neq 1$, we see that $n \ge 2$. Then $\alpha^n = \sum_{k=0}^{n-1} c_k \alpha^k$ for some $c_0, \dots, c_{n-1} \in \nn_0$, which means that $1 = \sum_{k=0}^{n-1} c_k \alpha^{k-n}$. Hence $1 \notin \mathcal{A}(M_\alpha)$, and it follows from Theorem~\ref{thm:1atomic} that $M_\alpha$ is not atomic.
	\end{proof}
	
	We conclude this section with examples of monoids $M_\alpha$ that are atomic but do not satisfy the ACCP.
	
	\begin{example} \label{ex:atomic monoids without the ACCP}
		Take $a,b \in \nn$ with $\gcd(a,b) = 1$ such that $1 < a < b$, and set $\alpha = a/b$. It follows from \cite[Proposition~3.5]{fG18} that the monoid $M_\alpha$ is atomic. On the other hand, we claim that $M_\alpha$ does not satisfy the ACCP. By \cite[Corollary 4.4]{CGG21}, there is an ascending chain $(x_n + \nn_0[\alpha])_{n \in \nn}$ of principal ideals of the monoid $(\N_0[\alpha], +)$ that does not stabilize. From the fact that $M_\alpha$ is a reduced monoid having $(\nn_0[\alpha],+)$ as a submonoid, we can deduce that the chain of principal ideals $(x_n + M_\alpha)_{n \in \nn}$ of $M_\alpha$ cannot stabilize in $M_\alpha,$ showing that $M_\alpha$ does not satisfy the ACCP.
	\end{example}

	\bigskip
	\section{The Ascending Chain Condition on Principal Ideals}
	\label{sec:ACCP}

	We have just seen in the previous section that satisfying the ACCP is a stronger condition than being atomic when restricted to the class consisting of the monoids $M_\alpha$. In this section, we provide two necessary conditions for a monoid $M_\alpha$ to satisfy the ACCP, and then we establish two factorization-theoretical characterizations: satisfying the ACCP is equivalent to both the bounded factorization property and the finite factorization property if one restricts attention to the class consisting of all monoids $M_\alpha$. We conclude this section by constructing monoids $M_\alpha$ satisfying the ACCP that are not UFMs.
	
	\begin{prop} \label{prop:accp}
		Let $\alpha \in (0,1)$ be an algebraic number with minimal pair $(p(x),q(x))$. If $M_\alpha$ satisfies the ACCP, then $p(x) - Q(x)q(x) \notin \N_0[x,x^{-1}]$ for any nonzero Laurent polynomial $Q(x) \in \N_0[x,x^{-1}]$.
	\end{prop}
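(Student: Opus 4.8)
The plan is to argue by contraposition: assuming there is a nonzero $Q(x) \in \N_0[x,x^{-1}]$ with $R(x) := p(x) - Q(x)q(x) \in \N_0[x,x^{-1}]$, I will build a strictly ascending chain of principal ideals in $M_\alpha$. The starting observation is that, since $\ell m(\alpha) = p(\alpha) - q(\alpha) = 0$, we have $p(\alpha) = q(\alpha)$, and hence $q(\alpha) = p(\alpha) = R(\alpha) + Q(\alpha)q(\alpha)$ in $M_\alpha$. Writing $y := q(\alpha) \in M_\alpha^\bullet$, this reads $y = R(\alpha) + Q(\alpha)\,y$. Because $\alpha < 1$, evaluating a nonzero element of $\N_0[x,x^{-1}]$ at $\alpha$ can never give $0$ (and $R(\alpha) = 0$ would force $p(\alpha) = Q(\alpha)q(\alpha)$, which I should rule out by a degree/valuation or minimality argument on $m(x)$ — this is a small subtlety to check, since if $R$ is the zero polynomial then $m(x) \mid Q(x)q(x) - q(x) \cdot 1$ in a way that contradicts $\deg$ or irreducibility); so I may assume $R(\alpha) \in M_\alpha^\bullet$.

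Next I iterate. From $y = R(\alpha) + Q(\alpha) y$ I get $Q(\alpha) y = Q(\alpha) R(\alpha) + Q(\alpha)^2 y$, and more generally, summing a telescoping identity,
\[
y \;=\; R(\alpha)\bigl(1 + Q(\alpha) + \cdots + Q(\alpha)^{n-1}\bigr) \;+\; Q(\alpha)^n y
\]
for every $n \in \nn$. Set $x_n := Q(\alpha)^n y \in M_\alpha^\bullet$. The displayed identity shows $x_n = x_{n+1} + Q(\alpha)^n R(\alpha)$, i.e. $x_n \in x_{n+1} + M_\alpha$, so $(x_n + M_\alpha)_{n \ge 1}$ is an ascending chain of principal ideals. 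To see it is not eventually stationary, I need $Q(\alpha)^n R(\alpha) \notin \{0\}$ (which holds since $\alpha<1$ makes all these evaluations positive) together with the fact that $x_{n}/x_{n+1} = 1/Q(\alpha)$ is a fixed real number; if $Q(\alpha) < 1$ then $x_n \to 0$ through distinct positive reals, so no two ideals coincide, and if $Q(\alpha) \ge 1$ one checks directly that $x_{n+1} + M_\alpha = x_n + M_\alpha$ would force $Q(\alpha)^n R(\alpha) \in -M_\alpha = \{0\}$, a contradiction. Thus the chain does not stabilize and $M_\alpha$ fails the ACCP.

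The main obstacle I anticipate is not the telescoping — that is routine — but two bookkeeping points: first, genuinely ensuring $R(\alpha) \neq 0$, which amounts to showing $p(x) - Q(x)q(x)$ is not the zero polynomial, and here I expect to invoke that $p$ and $q$ are the components of the \emph{minimal} pair of $\alpha$ (so a polynomial relation $p(x) = Q(x)q(x)$ would contradict minimality of $m(x)$, perhaps via a degree comparison after recalling $\deg p, \deg q \le \deg m$); and second, verifying that the $x_n$ are pairwise distinct as \emph{elements} of $M_\alpha$ so that the ideals $x_n + M_\alpha$ are genuinely strictly increasing — for this I will lean on the fact that $\alpha \in (0,1)$ forces the evaluation map to be well-behaved enough that $x_n = x_m$ in $\R_{\ge 0}$ already implies $n = m$ once $Q(\alpha) \neq 1$, while the case $Q(\alpha) = 1$ needs the separate positivity argument on the "gaps" $Q(\alpha)^n R(\alpha)$ noted above. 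Once these are in place the contrapositive is complete.
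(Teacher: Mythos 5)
Your construction is essentially the paper's own proof: the paper sets $a_n := Q(\alpha)^n q(\alpha)$ and $b_n := Q(\alpha)^n r(\alpha)$ with $r(x) := p(x) - Q(x)q(x)$, uses $p(\alpha) = q(\alpha)$ to get $a_n = a_{n+1} + b_n$, and concludes that the ascending chain $(a_n + M_\alpha)_{n \in \N}$ never stabilizes because $a_n - a_{n+1} = b_n > 0$ in $\R$. Your $x_n$ and your gaps $Q(\alpha)^n R(\alpha)$ are exactly these objects, and your case split $Q(\alpha) < 1$ versus $Q(\alpha) \ge 1$ is unnecessary: the positivity argument you give in the second case works verbatim for every $n$, independently of the size of $Q(\alpha)$, which is precisely how the paper argues.

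The one substantive issue is the subtlety you flag, namely $R(\alpha) \neq 0$, and there the fix you sketch does not work: a relation $p(x) = Q(x)q(x)$ does not contradict the minimality or irreducibility of $m(x)$ by a degree comparison. From $q(x)(Q(x)-1) = \ell m(x)$ and irreducibility of $m$ one only deduces (since $q(\alpha) \neq 0$) that $m$ divides $Q - 1$, i.e.\ $Q(\alpha) = 1$, which is not yet absurd. The clean patch is different: if $R = 0$, then $Q(\alpha)q(\alpha) = p(\alpha) = q(\alpha)$ forces $Q(\alpha) = 1$. If $Q(x) = x^k$ is a monic monomial, then $\alpha^k = 1$ gives $k = 0$ because $\alpha \in (0,1)$, hence $p = q$, contradicting $p - q = \ell m \neq 0$. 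Otherwise the coefficient sum of $Q$ is at least $2$, and the identity $1 = Q(\alpha)$ shows $1 \notin \mathcal{A}(M_\alpha)$, so by Theorem~\ref{thm:1atomic} the monoid $M_\alpha$ is not atomic and in particular fails the ACCP, so the desired conclusion holds in this degenerate case as well. (For comparison, the paper's proof simply asserts that the $b_n$ are nonzero and does not treat this case explicitly either.) With that patch in place of your minimality argument, your proof is complete.
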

	
	\begin{proof}
		Suppose, for the sake of contradiction, that there exists a nonzero Laurent polynomial $Q(x) \in \N_0[x,x^{-1}]$ such that $r(x) := p(x) - Q(x)q(x) \in \N_0[x,x^{-1}]$. Now consider the sequences $(a_n)_{n \in \N}$ and $(b_n)_{n \in \N}$ defined by
		\[
		a_n = Q(\alpha)^n q(\alpha) \quad  \text{ and } \quad b_n := Q(\alpha)^n r(\alpha),
		\]
		respectively, for every $n \in \N$. Observe that the terms of both $(a_n)_{n \in \N}$ and $(b_n)_{n \in \N}$ are nonzero elements in $M_\alpha$. On the other hand,
		\[
		a_n = Q(\alpha)^n q(\alpha) = Q(\alpha)^n p(\alpha) = Q(\alpha)^{n+1} q(\alpha) + Q(\alpha)^{n} r(\alpha) = a_{n+1} + b_{n}
		\] 
		for every $n \in \N$. Therefore $(a_n + M_\alpha)_{n \in \N}$ is an ascending chain of principal ideals of $M_\alpha$. Since $a_n - a_{n+1} = b_n > 0$ for every $n \in \N$, the chain of ideals $(a_n + M_\alpha)_{n \in \N}$ does not stabilize, contradicting that $M_\alpha$ satisfies the ACCP.
	\end{proof}
	
	In Example~\ref{ex:atomic monoids without the ACCP}, we saw that $M_\alpha$ is an atomic monoid that does not satisfy the ACCP for most choices of $q \in \qq_{> 0}$. However, there are also some examples of irrational algebraic real numbers $\alpha$ such that $M_\alpha$ is atomic but does not satisfy the ACCP, and we can identify some of them using Proposition~\ref{prop:accp}.
	
	\begin{example} \label{ex:quadratic atomic monoids without the ACCP}
		Take $a,b \in \N$ such that $\gcd(a,b) = 1$ and $1 < a < b$. Assume, in addition, that $a$ and $b$ are not perfect squares, and then set $\alpha := \sqrt{a/b}$. Then $\alpha$ is a non-rational algebraic number with minimal polynomial $m(x) := x^2 - a/b$. Suppose, by way of contradiction, that $M_\alpha$ is not atomic. By Theorem~\ref{thm:1atomic}, we can take $c_1, \dots, c_n \in \N_0$ such that $1 = c_1 \alpha + \cdots + c_n \alpha^n$. Since $\alpha$ is a root of the polynomial $f(x) := c_n x^n + \cdots + c_1 x - 1 \in \Z[x]$, there exists a polynomial $g(x) \in \Q[x]$ such that $f(x) = m(x)g(x)$. By Gauss's lemma, there exists $q \in \qq_{> 0}$ such that $m'(x) := qm(x) \in \Z[x]$ and $g'(x) := q^{-1}g(x) \in \Z[x]$. Since $qm(x)$ has integer coefficients, $q \in b\N$. Therefore $a \mid m'(0)$, so $a \mid m'(0) g'(0) = f(0) = 1$, a contradiction. Thus, $M_\alpha$ is atomic. Let us argue now that $M_\alpha$ does not satisfy the ACCP. Since $\alpha$ has minimal pair $(p(x), q(x)) := (b x^2,a)$, for $Q(x) := x^2$ we see that $p(x) - Q(x) q(x) = (b-a)x^2$, which belongs to $\nn_0[x,x^{-1}]$. Hence $M_\alpha$ does not satisfy the necessary condition in Proposition~\ref{prop:accp}, and so it does not satisfy the ACCP.
	\end{example}

	\medskip
	\subsection{The Bounded and Finite Factorization Properties}
	
	In this subsection we prove that in the context of the monoids $M_{\alpha}$, satisfying the ACCP, being a BFM, and being an FFM are equivalent properties. Recall that a monoid $M$ is a BFM (resp., an FFM) provided that $\mathsf{L}(x)$ (resp., $\mathsf{Z}(x)$) is finite for all $x \in M^\bullet$. We proceed to establish the main result of this section.
	
	\begin{theorem}\label{thm:ffm}
		For $\alpha \in \mathbb{R}_{> 0},$ the following statements are equivalent. 
		\begin{enumerate}
			\item[(a)] $M_\alpha$ is an FFM.
			\smallskip
			
			\item[(b)] $M_\alpha$ is a BFM.
			\smallskip
			
			\item[(c)] $M_\alpha$ satisfies the ACCP.
		\end{enumerate}
	\end{theorem}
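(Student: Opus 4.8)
The implications (a) $\Rightarrow$ (b) $\Rightarrow$ (c) are immediate from the general chain of implications in Diagram~\eqref{eq:atomic chain} (every FFM is a BFM, and every BFM satisfies the ACCP), so the only substantive content is the implication (c) $\Rightarrow$ (a). The plan is to prove the contrapositive: assuming $M_\alpha$ is \emph{not} an FFM, produce a non-stabilizing ascending chain of principal ideals, thereby violating the ACCP. A first reduction: if $\alpha$ is transcendental then $M_\alpha$ is the free commutative monoid on $\{\alpha^n : n \in \Z\}$, which is an FFM, so there is nothing to prove; similarly if $M_\alpha$ is not atomic then (c) fails vacuously by Theorem~\ref{thm:1atomic} and atomicity of the free monoid, so we may assume $\alpha$ is algebraic and $M_\alpha$ is atomic. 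By Theorem~\ref{thm:1atomic} the atoms of $M_\alpha$ are exactly the integer powers $\alpha^n$, so a factorization of an element $x$ is simply an expression $x = \sum_{n \in \Z} c_n \alpha^n$ with $c_n \in \nn_0$ almost all zero, and two distinct factorizations correspond to two such expressions differing as formal sums.

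The key algebraic input is the following observation, which I would isolate as a lemma. If $x$ has two distinct factorizations, then subtracting them (inside the lattice of Laurent polynomials, after clearing denominators) yields a nonzero Laurent polynomial relation $\sum_{n} d_n \alpha^n = 0$ with integer coefficients $d_n$, not all of the same sign; equivalently, $\alpha$ is a root of a nonzero Laurent polynomial with a positive part $P(x)$ and a negative part $N(x)$, both in $\nn_0[x,x^{-1}]$ and sharing no monomials, so that $P(\alpha) = N(\alpha)$ with $P \neq N$. Now I would use this relation to manufacture a strictly descending sequence of elements of $M_\alpha$: pick a monomial $c x^k$ appearing in (say) $P$ with $c \geq 1$, and write $P(x) = c x^k + P_1(x)$. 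The relation $\alpha^k \cdot c = N(\alpha) - P_1(\alpha)$ (rearranged so everything is nonnegative) should let one express a fixed multiple of $\alpha^k$ as that multiple of $\alpha^k$ plus a positive element of $M_\alpha$, iterating to get $a_0 > a_1 > a_2 > \cdots$ in $M_\alpha$ with each difference in $M_\alpha^\bullet$ — exactly the kind of construction carried out in the proof of Proposition~\ref{prop:accp}, but now driven by a nontrivial factorization relation rather than by the minimal pair. The resulting chain $(a_n + M_\alpha)_n$ is ascending and non-stabilizing, so the ACCP fails.

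The main obstacle I anticipate is getting the signs and the "rearrangement into nonnegative pieces" to work uniformly: from $P(\alpha) = N(\alpha)$ with $P \neq N$ one does not automatically get a clean self-referential identity of the form $a = a' + b$ with $a' = a$ in value but $a'$ a proper "sub-sum," because the two sides may not share a common monomial after cancellation. The fix is to multiply the relation by a suitable large power $\alpha^m$ and by a suitable constant so that one side literally contains the other as a sub-multiset of monomials — concretely, if $\deg P \le \deg N$ or $\operatorname{ord} P \ge \operatorname{ord} N$ fails one can always multiply the whole relation by a monomial to shift supports, and then compare the coefficient of a chosen monomial on both sides; iterating the substitution $\alpha^k \mapsto (\text{expression from the relation})$ inside a fixed starting element produces ever-larger factorization lengths and hence, since the element stays fixed, either infinitely many factorizations (contradicting FFM directly) or, tracking the "defect," a genuine infinite strictly descending chain. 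I would present the descending-chain version since it is the cleanest route to contradicting (c), mirroring the technique already used in Proposition~\ref{prop:accp} and Example~\ref{ex:atomic monoids without the ACCP}.
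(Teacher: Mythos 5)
Your reduction steps (the easy implications, disposing of the transcendental and non-atomic cases, and identifying factorizations with Laurent-polynomial expressions via Theorem~\ref{thm:1atomic}) are fine, but the core of your argument has a genuine gap: you propose to start from a single nontrivial relation $P(\alpha)=N(\alpha)$ with $P\neq Q$ sharing no monomials and, by rearranging or multiplying by monomials, manufacture a non-stabilizing ascending chain of principal ideals. If that worked, it would prove that every $M_\alpha$ that is not a UFM fails the ACCP --- which is false, and the paper itself exhibits the counterexample: for $\alpha = 1-\frac{\sqrt{2}}{2}$ (Theorem~\ref{thm:FFM evaluation monoids} and Example~\ref{ex:FFM that is not a UFM}), $M_\alpha$ is an FFM, hence satisfies the ACCP, yet $4\alpha = 2\alpha^2+1$ is exactly such a relation. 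Concretely, your ``fix'' of multiplying by a power of $\alpha$ so that one side contains the other as a sub-multiset of monomials cannot be carried out in general: a monomial shift moves both sides identically, and the self-referential identity you need (an element equal to itself plus a nonzero element of $M_\alpha$) is impossible in a submonoid of $\R_{\ge 0}$. What makes the construction in Proposition~\ref{prop:accp} work is the extra divisibility hypothesis $p(x)-Q(x)q(x)\in\N_0[x,x^{-1}]$, not the mere existence of a relation; and your fallback of ``iterating a substitution to get unbounded lengths'' only attacks the BFM property, which does not imply failure of the ACCP in general (the paper recalls $\langle 1/p \mid p\in\pp\rangle$ as an ACCP monoid that is not a BFM). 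So the step from ``not an FFM'' to ``not ACCP'' --- the entire content of (c)~$\Rightarrow$~(a) --- is not actually established.

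For comparison, the paper's proof does use the full strength of the non-FFM hypothesis: it takes $\beta\in M_\alpha$ with $|\mathsf{Z}(\beta)|=\infty$, assumes $\alpha>1$ after replacing $\alpha$ by $\alpha^{-1}$, and shifts $\beta$ by a power of $\alpha$ so that no positive power of $\alpha$ divides it; this forces every factorization of $\beta$ to use only the atoms $\alpha^{-k}$ with $k\ge 0$, so $\beta$ already has infinitely many factorizations in the submonoid $\nn_0[\alpha^{-1}]$. Then the known equivalence of the ACCP and the finite factorization property for the monoids $\nn_0[\alpha]$ (Theorem~4.11 of the cited Correa-Morris--Gotti paper) yields a non-stabilizing chain of principal ideals in $\nn_0[\alpha^{-1}]$, which remains non-stabilizing in the reduced monoid $M_\alpha$, contradicting (c). If you want a self-contained argument avoiding that citation, you would essentially have to reprove that equivalence for $\nn_0[\alpha^{-1}]$ (for instance along the lines of Lemmas~\ref{lem:decreasing} and~\ref{lem:finite}), rather than argue from a single factorization relation.
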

	
	\begin{proof}
		(a) $\Rightarrow$ (b): This follows from the definitions of a BFM and an FFM.
		\smallskip
		
		(b) $\Rightarrow$ (c): This is a special case of \cite[Corollary 1.3.3]{GH06}.
		\smallskip
		
		(c) $\Rightarrow$ (a): Suppose that the monoid $M_\alpha$ satisfies the ACCP. If $\alpha$ is transcendental, then $M_\alpha$ is a free commutative monoid, and thus an FFM. We assume, therefore, that $\alpha$ is algebraic.
		\smallskip
		
		Suppose, by way of contradiction, that $M_\alpha$ is not an FFM. Then $\alpha \neq 1$ and, after replacing $\alpha$ by $\alpha^{-1}$ if necessary, we can assume that $\alpha > 1$. Since $M_\alpha$ is not an FFM, we can choose $\beta \in M_\alpha$ such that $|\mathsf{Z}_{M_\alpha}(\beta)| = \infty$. Because $\alpha > 1$, there exists $N \in \nn$ such that $\alpha^n \nmid_{M_\alpha} \beta$ for any $n \in \Z$ with $n > N$. As $M_\alpha$ is atomic, $\mathcal{A}(M_\alpha) = \{\alpha^n \mid n \in \Z\}$ by Theorem~\ref{thm:1atomic}. Consequently, there is a bijection $\mathsf{Z}_{M_\alpha}(\beta) \to \mathsf{Z}_{M_\alpha}(\beta/\alpha^N)$ given by multiplication by $\alpha^{-N}$. In addition, $\beta/\alpha^N$ is not divisible by any positive power of $\alpha$ in $M_\alpha$. Then after replacing $\beta$ by $\beta/\alpha^N,$ we can further assume that $\alpha^k \mid_{M_\alpha} \beta$ implies that $k \le 0$. Since $M_{\alpha^{-1}} = M_\alpha$ is atomic, it follows from Proposition~\ref{prop:non-atomic characterization} that the additive monoid $\N_0[\alpha^{-1}]$ is neither antimatter nor finitely generated. Hence, \cite[Theorem~4.1]{CG20} guarantees that $\mathcal{A}(\nn_0[\alpha^{-1}]) = \{\alpha^{-k} \mid k \in \nn_0\}$. As a result, the fact that $\alpha^k \nmid_{M_\alpha} \beta$ for any $k \in \nn$ ensures that $\mathsf{Z}_{M_\alpha}(\beta) = \mathsf{Z}_{\nn_0[\alpha^{-1}]}(\beta)$ and, therefore, that $|\mathsf{Z}_{\nn_0[\alpha^{-1}]}(\beta)| = \infty$. Thus, $\nn_0[\alpha^{-1}]$ is not an FFM. Now it follows from \cite[Theorem~4.11]{CG20} that $\nn_0[\alpha^{-1}]$ does not satisfy the ACCP. However, this is a contradiction to the fact that $\nn_0[\alpha^{-1}]$ is a submonoid of the reduced monoid $M_\alpha$, which satisfies the ACCP. Hence, $M_\alpha$ is an FFM.
	\end{proof}

	\medskip
	\subsection{A Class of FFMs that are not UFMs} 
	\label{sub:ffm not ufm}
	
	We have exhibited in Examples~\ref{ex:atomic monoids without the ACCP} and~\ref{ex:quadratic atomic monoids without the ACCP} some atomic monoids $M_\alpha$ that do not satisfy the ACCP. However, the only examples we have so far of monoids $M_\alpha$ satisfying the ACCP (or, equivalently, being FFMs) are the trivial cases, namely, those corresponding to $\alpha = 1$ and $\alpha$ transcendental. Our purpose in this subsection is to produce monoids $M_{\alpha}$ that are FFMs for some algebraic~$\alpha$ different from $1$. This will yield monoids $M_\alpha$ that are FFMs but not UFMs.
	\smallskip
	
	To do so, let $\alpha_1, \alpha_2 \in \R$ be distinct roots of an irreducible quadratic polynomial in $\Q[x]$, and set $M := M_{\alpha_1}$ and $K := \mathbb{Q}(\alpha_1)$. Then $K$ is a real quadratic field extension of~$\Q$ that contains the monoid $M$. In addition, let $T \colon \qq(\alpha_1) \to \rr^2$ be the $\Q$-linear map induced by the assignments $1 \mapsto (1,1)$ and $\alpha_1 \mapsto (\alpha_1,\alpha_2)$, and set $M' = T(M)$. Let $T_M \colon M \to M'$ be the map obtained by restricting the domain and codomain of~$T$ to $M$ and $M'$, respectively. We use the notation introduced in this paragraph throughout the rest of this section.
	
	\begin{lemma} \label{lem:monoid M'}
		The following statements hold. 
		\begin{enumerate}
			\item $T$ is an injective $\Q$-algebra homomorphism. 
			\smallskip
			
			\item $T_M$ is a monoid isomorphism.
			\smallskip
			
			\item $M' = \big\{ \sum_{i \in I} c_i (\alpha_1^i, \alpha_2^i) \mid c_i \in \N_0, I \subseteq \Z, |I| < \infty \big\}$.
		\end{enumerate}
	\end{lemma}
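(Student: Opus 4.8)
The three statements of Lemma~\ref{lem:monoid M'} should be proved in sequence, with each feeding into the next.

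For part~(1), I would first recall that $\qq(\alpha_1)$ is a two-dimensional $\qq$-vector space with basis $\{1,\alpha_1\}$, so a $\qq$-linear map out of it is determined by its values on these two basis elements; thus $T$ is well-defined. Injectivity follows because $(1,1)$ and $(\alpha_1,\alpha_2)$ are $\qq$-linearly independent in $\rr^2$: a relation $a(1,1)+b(\alpha_1,\alpha_2)=(0,0)$ forces $a+b\alpha_1=0=a+b\alpha_2$, hence $b(\alpha_1-\alpha_2)=0$, and since $\alpha_1\neq\alpha_2$ we get $b=0$ and then $a=0$. For the multiplicativity $T(xy)=T(x)T(y)$ (with coordinatewise multiplication on $\rr^2$), the clean way is to observe that the map $\sigma\colon\qq(\alpha_1)\to\qq(\alpha_2)=\qq(\alpha_1)$ sending $\alpha_1\mapsto\alpha_2$ is the nontrivial field automorphism of $K/\qq$ (it is the Galois conjugation, well-defined precisely because $\alpha_1,\alpha_2$ are the two roots of the same irreducible quadratic); then $T(x)=(x,\sigma(x))$, and multiplicativity of $T$ is just multiplicativity of the identity map in the first coordinate and of $\sigma$ in the second. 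That $T(1)=(1,1)$ is immediate. So $T$ is an injective $\qq$-algebra homomorphism.

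Part~(2): by construction $T_M$ is the corestriction of $T$ to its image $M'=T(M)$, so it is surjective onto $M'$; it is injective because $T$ is; and it is a monoid homomorphism for the \emph{additive} structure because $T$ is $\qq$-linear, hence additive. (We only need additivity here, but part~(1) gives more.) Therefore $T_M\colon (M,+)\to(M',+)$ is a bijective additive homomorphism, i.e.\ a monoid isomorphism. Part~(3): every element of $M=M_{\alpha_1}$ has the form $f(\alpha_1)=\sum_{i\in I}c_i\alpha_1^i$ with $I\subseteq\zz$ finite and $c_i\in\nn_0$; applying $T$ and using additivity together with $T(\alpha_1^i)=(T(\alpha_1))^i=(\alpha_1^i,\alpha_2^i)$ (valid for negative $i$ too, since $T$ is a $\qq$-algebra map and $\alpha_1^i$ is a genuine element of the field $K$, whose inverse maps to the inverse of $(\alpha_1,\alpha_2)$, namely $(\alpha_1^{-1},\alpha_2^{-1})$) gives $T(f(\alpha_1))=\sum_{i\in I}c_i(\alpha_1^i,\alpha_2^i)$. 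Conversely any such finite sum is visibly $T$ of the corresponding Laurent expression in $\alpha_1$, which lies in $M$. Hence the displayed equality.

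\textbf{Anticipated obstacle.} The one genuinely substantive point is the multiplicativity in part~(1), i.e.\ justifying that coordinatewise multiplication on $T(\qq(\alpha_1))$ matches multiplication in the field. The slick argument above hinges on identifying the second coordinate with the Galois conjugation $\sigma$; the thing to be careful about is that $\sigma$ is a ring homomorphism $K\to K$ (not merely a $\qq$-linear map), which is exactly where irreducibility of the quadratic and $\alpha_1\neq\alpha_2$ are used — $\sigma$ permutes the two roots of the minimal polynomial and extends to an automorphism. Everything else (well-definedness from the basis, injectivity from linear independence of the image vectors, the description in part~(3) including the handling of negative exponents via the algebra-homomorphism property) is routine bookkeeping once that identification is in place. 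I would also remark, for use later in the section, that handling $\alpha_1^{-1}$ is legitimate precisely because we have passed to the \emph{field} $K$, where $\alpha_1$ is invertible, so no extra care beyond "$T$ is a $\qq$-algebra homomorphism between fields" is needed.
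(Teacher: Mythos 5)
Your proposal is correct and follows essentially the same route as the paper: the key step in both is to identify $T(x)=(\sigma_1(x),\sigma_2(x))$ with $\sigma_1$ the inclusion and $\sigma_2$ the conjugate embedding $\alpha_1\mapsto\alpha_2$ (your Galois automorphism), from which multiplicativity, and then parts (2) and (3), follow exactly as in the paper. Your injectivity argument via linear independence of $(1,1)$ and $(\alpha_1,\alpha_2)$ is a harmless variant of the paper's observation that the first coordinate of $T$ is the inclusion map.
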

	
	\begin{proof}
		(1) Since $T$ is a $\Q$-linear map, the equalities $T(x+y) = T(x)+T(y)$ and $T(qx) = q T(x)$ hold for all $x,y \in \qq(\alpha_1)$ and $q \in \Q$. For each $i \in \{1,2\}$, we let~$\sigma_i$ denote the $\qq$-algebra homomorphism $\Q(\alpha_1) \to \R$ induced by the assignment  $\alpha_1 \mapsto \alpha_i$. Then for each $x \in \Q(\alpha_1)$, we can verify that $T(x) = (\sigma_1(x),\sigma_2(x)).$ Therefore, for all $x, y \in \Q(\alpha_1)$,
		\[
		T(xy) = (\sigma_1(xy), \sigma_2(xy)) = (\sigma_1(x)\sigma_1(y), \sigma_2(x)\sigma_2(y)) = T(x) T(y).
		\]
		Hence $T$ is a $\Q$-algebra homomorphism. Note that $T(\alpha_1^{-1}) = T(\alpha_1)^{-1} = (\alpha_1^{-1},\alpha_2^{-1})$. Now if $x \in \ker T$ for some $x \in \qq(\alpha_1)$, then $(\sigma_1(x), \sigma_2(x)) = (0,0)$, and so the fact that $\sigma_1$ is the inclusion map ensures that $x=0$. Thus, $T$ is an injective $\qq$-algebra homomorphism.
		\smallskip
		
		(2) Since $T$ is injective, it is also injective when restricted to $M \subseteq \Q(\alpha_1)$. Moreover, because $M'$ is the image of $M$ under $T$, the map $T_M \colon M \to M'$ is a bijection. In addition, the linearity of $T$ over $\qq$ immediately implies that $T_M$ is a monoid homomorphism, making it a monoid isomorphism from $M$ onto $M'$.
		\smallskip
		
		(3) Finally, let $x$ be an arbitrary element in $M$. Then $x = \sum_{i \in I} c_i\alpha_1^i$ for a finite subset $I$ of $\Z$ and coefficients $c_i \in \N_0$. Because $T$ is a $\Q$-algebra homomorphism by part~(1), we see that
		\[
		T(x) = \sum_{i \in I} c_i T(\alpha_1)^i = \sum_{i \in I} c_i (\alpha_1,\alpha_2)^i= \sum_{i \in I} c_i (\alpha_1^i, \alpha_2^i).
		\]
		Therefore  $M' \subseteq \big\{ \sum_{i \in I} c_i (\alpha_1^i, \alpha_2^i) \mid c_i \in \N_0, I \subseteq \Z, |I| < \infty \big\}$. The reverse implication follows immediately as $T$ is a $\qq$-algebra homomorphism and $M$ is a monoid containing $\{\alpha_1^i \mid i \in \Z\}$.
	\end{proof}
	
	In order to establish the main result of this section, we need the following two lemmas.
	
	\begin{lemma} \label{lem:decreasing}
		Let $M$ be an additive submonoid of $\rr^2_{\ge 0}$. If $v, w \in M$ with $v = (v_1, v_2)$ satisfy $v + M \subseteq w + M$, then $w \in [0,v_1] \times [0,v_2]$. 
	\end{lemma}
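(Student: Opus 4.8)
The plan is to unpack the ideal containment $v + M \subseteq w + M$ into an additive divisibility relation and then read off the coordinatewise inequalities from the fact that $M$ sits inside the nonnegative quadrant.

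First I would use that $0 \in M$ (every monoid contains its identity), so $v = v + 0$ lies in $v + M$, hence in $w + M$ by hypothesis. This means there exists $u \in M$ with $v = w + u$; that is, $w$ additively divides $v$ in $M$. Writing $v = (v_1, v_2)$, $w = (w_1, w_2)$, and $u = (u_1, u_2)$, the equation $v = w + u$ becomes $v_1 = w_1 + u_1$ and $v_2 = w_2 + u_2$.

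Next I would invoke $M \subseteq \rr^2_{\ge 0}$ twice: since $u \in M$ we have $u_1, u_2 \ge 0$, so $w_i = v_i - u_i \le v_i$ for $i \in \{1,2\}$; and since $w \in M$ we have $w_1, w_2 \ge 0$. Combining these gives $0 \le w_i \le v_i$ for $i \in \{1,2\}$, which is exactly the assertion $w \in [0,v_1] \times [0,v_2]$.

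There is essentially no obstacle here: the content of the lemma is the elementary observation that in a submonoid of the nonnegative quadrant, "$w$ divides $v$" forces $w$ to be dominated by $v$ in each coordinate while remaining nonnegative. The only point requiring a word of care is the translation of the principal-ideal inclusion $v + M \subseteq w + M$ into the membership $v \in w + M$, which is immediate once one recalls that $0 \in M$.
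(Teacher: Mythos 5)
Your proof is correct and follows the paper's argument exactly: from $v+M\subseteq w+M$ one gets $v=w+d$ with $d\in M\subseteq\rr^2_{\ge 0}$, and the coordinatewise bounds follow immediately. Your explicit remark that $0\in M$ justifies $v\in w+M$ is a detail the paper leaves implicit, but the approach is the same.
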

	
	\begin{proof}
		Since $v + M \subseteq w + M$, we see that $w$ divides $v$ in $M$ and, therefore, we can write $v = w + d$, where $d = (d_1, d_2) \in M \subseteq \rr_{\ge0}^2$. Then $w = (v_1 - d_1, v_2 - d_2)$ belongs to $[0, v_1] \times [0, v_2]$.
	\end{proof}
	
	For the rest of this section, we further assume that $0 < \alpha_1 < 1 < \alpha_2$. We observe that, in light of part~(3) of Lemma~\ref{lem:monoid M'}, the inclusion $M' \subseteq \{(0,0)\} \cup \rr_{> 0} \times \rr_{> 0}$ holds. 
	
	\begin{lemma} \label{lem:finite}
		If $(v_1,v_2) \in M'$, then the set $M' \cap ([0,v_1] \times [0, v_2])$ is finite. 
	\end{lemma}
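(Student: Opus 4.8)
The plan is to exploit two facts about the exponents: since $0<\alpha_1<1$ we have $\alpha_1^i\to\infty$ as $i\to-\infty$, and since $\alpha_2>1$ we have $\alpha_2^i\to\infty$ as $i\to+\infty$. Together these will force the exponents appearing in any representation (as in part~(3) of Lemma~\ref{lem:monoid M'}) of an element of $M'$ lying inside the box $B:=[0,v_1]\times[0,v_2]$ to stay within a fixed finite window, and the coefficients to stay bounded. First I would dispose of the degenerate case: if $(v_1,v_2)=(0,0)$, then $B=\{(0,0)\}$, so $M'\cap B$ is finite. Hence I may assume $(v_1,v_2)\neq(0,0)$, and then, since $(v_1,v_2)\in M'\subseteq\{(0,0)\}\cup\rr_{>0}\times\rr_{>0}$, both $v_1>0$ and $v_2>0$.

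Next I would fix $N\in\nn$ large enough that $\alpha_1^{-N}>v_1$ and $\alpha_2^{N}>v_2$ (possible by the two divergences above), and prove the following claim: every $w\in M'\cap B$ admits a representation
\[
w=\sum_{i=-N}^{N}c_i(\alpha_1^i,\alpha_2^i),\qquad c_i\in\nn_0,\ \ c_i\le v_1\alpha_1^{-i}\ \text{ for all } i.
\]
Indeed, by part~(3) of Lemma~\ref{lem:monoid M'} write $w=\sum_{i\in I}c_i(\alpha_1^i,\alpha_2^i)$ with $I\subseteq\Z$ finite and each $c_i\in\nn$ (just discard zero coefficients). Every summand has nonnegative coordinates, so for each $i\in I$ we get $c_i\alpha_1^i\le w_1\le v_1$ and $c_i\alpha_2^i\le w_2\le v_2$; since $c_i\ge1$, these yield $\alpha_1^i\le v_1<\alpha_1^{-N}$ and $\alpha_2^i\le v_2<\alpha_2^{N}$, and hence $-N\le i\le N$ because $t\mapsto\alpha_1^t$ is strictly decreasing and $t\mapsto\alpha_2^t$ is strictly increasing. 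The same inequalities give $c_i\le v_1\alpha_1^{-i}$. Padding with $c_i=0$ for indices in $\{-N,\dots,N\}\setminus I$ produces the claimed representation.

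Finally I would conclude by a counting argument: the set $F$ of all formal sums $\sum_{i=-N}^{N}c_i(\alpha_1^i,\alpha_2^i)$ with $c_i\in\nn_0$ and $c_i\le v_1\alpha_1^{-i}$ for every $i$ is finite, since there are finitely many indices in $\{-N,\dots,N\}$ and, for each of them, finitely many admissible values of $c_i$; therefore the image of $F$ under evaluation in $\rr^2$ is a finite subset of $M'$, and by the claim it contains $M'\cap B$. This settles the lemma. The only subtle point is that representations of elements of $M'$ need not be unique; but I do not need uniqueness, only that \emph{some} representation of each $w\in M'\cap B$ has all exponents in the finite window and bounded coefficients, which is exactly what the termwise inequalities deliver. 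That termwise estimate — and the fact that it is available precisely because $0<\alpha_1<1<\alpha_2$ controls both tails of the exponent range — is the crux of the argument, and there is no real obstacle beyond handling it carefully.
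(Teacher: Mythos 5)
Your proposal is correct and follows essentially the same route as the paper: use the divergence of $\alpha_1^{-n}$ and $\alpha_2^{n}$ to confine all exponents appearing in a representation of an element of the box to a fixed finite window, then bound the coefficients termwise (the paper uses the second coordinate for nonnegative powers and the first for negative powers, while you use the first coordinate throughout — an immaterial difference), and conclude by counting finitely many admissible coefficient tuples.
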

	
	\begin{proof}
		Set $v := (v_1,v_2)$ and $S_v := M' \cap ([0,v_1] \times [0, v_2])$. If $v = (0,0)$, then $S_v$ is a singleton and thus finite. Now we assume that $v \neq (0,0)$. Note that since $\alpha_1^{-1} > 1$ and $\alpha_2 > 1,$ the sequences $(\alpha_1^{-n})_{n \in \N_0}$ and $(\alpha_2^n)_{n \in \N_0}$ both increase to infinity and, as a result, the nonempty set
		\begin{equation*}
			N := \{ n \in \Z \mid \alpha_1^n \leq v_1 \text{ and } \alpha_2^n \leq v_2\}
		\end{equation*}
		is bounded. Let $m$ be the maximum of $N$. Take a nonzero $s \in S_v$. Since $T$ is injective, there exists a unique $\alpha \in M$ such that $s = T(\alpha)$. Write
		\begin{equation} \label{eq:representation of alpha}
			\alpha = \sum_{i=0}^m q_i\alpha_1^{-i} + \sum_{i=0}^m p_i \alpha_1^i \in M^\bullet,
		\end{equation}
		where $q_0, \dots, q_m$ and $p_0, \dots, p_m$ are nonnegative integers. As a result, we see that
		\begin{align*}
			s  &= \sum_{i=0}^{m} q_i T(\alpha_1^{-i}) + \sum_{i=0}^{m} p_i T(\alpha_1^i) \\
			&= \sum_{i=0}^{m} q_i (\alpha_1^{-i}, \alpha_2^{-i}) + \sum_{i=0}^{m} p_i (\alpha_1^i,\alpha_2^i) \\
			&= \bigg( \sum_{i=0}^{m}( q_i \alpha_1^{-i} + p_i \alpha_1^i), \, \sum_{i=0}^{m} (q_i \alpha_2^{-i} + p_i \alpha_2^i )\bigg).
		\end{align*}
		Because $\alpha_2 > 1$, after looking at the second coordinate of $s$, we infer that $p_i \le p_i \alpha_2^i \le v_2$ for every $i \in \{0, \dots, m\}$. Hence, there are at most $(v_2 + 1)^{m+1}$ many possible $(m+1)$-tuples $(p_0, p_1, \dots, p_m)$ to choose for the respective coefficients of  $\alpha_1^0, \dots, \alpha_1^m$ for a representation of $\alpha$ as in~\eqref{eq:representation of alpha}. Symmetrically, since $\alpha_1^{-1} > 1,$ there are finitely many possible $(m+1)$-tuples $(q_0, q_1, \dots, q_m)$ one can choose to express $\alpha$ as in~\eqref{eq:representation of alpha}. Consequently, the set $T_M^{-1}(S_v)$ is finite, which implies that $S_v$ is also finite.
	\end{proof}
	
	We are in a position to prove that $M$ is an FFM.
	
	\begin{theorem} \label{thm:FFM evaluation monoids}
		Suppose that $\alpha_1$ and $\alpha_2$ are the roots of an irreducible quadratic polynomial in $\Q[x]$ such that $0 < \alpha_1 < 1< \alpha_2$. Then $M_{\alpha_1}$ is an FFM and, therefore, satisfies the ACCP.
	\end{theorem}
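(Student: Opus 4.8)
The plan is to push the whole problem through the isomorphism $T_M$ onto the planar monoid $M' = T(M_{\alpha_1}) \subseteq \R^2_{\ge 0}$ and then exploit Lemmas~\ref{lem:decreasing} and~\ref{lem:finite}. By part~(2) of Lemma~\ref{lem:monoid M'}, $T_M \colon M_{\alpha_1} \to M'$ is a monoid isomorphism, and since the ACCP, the bounded factorization property, and the finite factorization property are all invariant under monoid isomorphism, it suffices to show that $M'$ satisfies the ACCP: once that is done, Theorem~\ref{thm:ffm} (applied with $\alpha = \alpha_1$) upgrades this to the assertion that $M_{\alpha_1}$ is an FFM, and the ``therefore'' clause is just the ACCP statement we will have already proved.

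To check the ACCP for $M'$, I would start from an arbitrary ascending chain of principal ideals $(x_n + M')_{n \in \N}$ of $M'$. For each $n \in \N$ the inclusion $x_1 + M' \subseteq x_n + M'$ forces $x_1 \in x_n + M'$, i.e.\ $x_n$ divides $x_1$ in $M'$; writing $x_1 = (v_1, v_2) \in M'$, Lemma~\ref{lem:decreasing} then places $x_n$ in the box $[0,v_1] \times [0, v_2]$. Thus every generator $x_n$ lies in $M' \cap ([0,v_1] \times [0, v_2])$, a finite set by Lemma~\ref{lem:finite}. Because $M'$ is reduced (it is a submonoid of $\R^2_{\ge 0}$, or alternatively it is isomorphic to the reduced monoid $M_{\alpha_1}$), distinct ideals $x_n + M'$ correspond to distinct generators $x_n$, so the chain $(x_n + M')_{n \in \N}$ attains only finitely many values; being ascending, it must stabilize. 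Hence $M'$, and therefore $M_{\alpha_1}$, satisfies the ACCP, and the theorem follows as explained above.

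The heavy lifting for this theorem has already been carried out in Lemmas~\ref{lem:decreasing} and~\ref{lem:finite} — the boundedness of divisors in the plane and the finiteness of $M'$ inside a box — so what remains is really just a short bookkeeping argument. The one place that calls for a moment's attention is the step converting ``the $x_n$ take finitely many values'' into ``the chain of ideals stabilizes,'' which genuinely uses that $M'$ is reduced; without that hypothesis an ascending chain could cycle through finitely many non-stabilizing ideals. If one preferred to avoid citing Theorem~\ref{thm:ffm}, the same two lemmas show directly that every element of $M'$ has only finitely many divisors, from which both the atomicity of $M'$ and the finiteness of each $\mathsf{Z}_{M'}(x)$ follow by an easy induction on the number of divisors; but routing through Theorem~\ref{thm:ffm} is the shortest path.
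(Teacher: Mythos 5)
Your argument is correct, and it uses the same key ingredients as the paper (the isomorphism $T_M$ from Lemma~\ref{lem:monoid M'} together with Lemmas~\ref{lem:decreasing} and~\ref{lem:finite}), but it closes the argument differently. The paper goes straight for the finite factorization property: from the two lemmas it concludes that every nonzero $v \in M'$ has only finitely many divisors in $M'$, invokes Halter-Koch's finiteness theorem \cite[Theorem~2]{fHK92} to get that $M'$ is an FFM, transfers this to $M_{\alpha_1}$ via the isomorphism, and obtains the ACCP as a consequence. You instead prove the ACCP for $M'$ directly (generators of an ascending chain all divide $x_1$, hence lie in the finite set $S_{x_1}$, so the chain stabilizes) and then upgrade to FFM by citing the equivalence in Theorem~\ref{thm:ffm}; this is legitimate and not circular, since Theorem~\ref{thm:ffm} is proved earlier and independently, but it makes the FFM conclusion rest on that theorem's ACCP~$\Rightarrow$~FFM implication, which itself leans on results of Correa-Morris and Gotti, whereas the paper's route is self-contained modulo the classical Halter-Koch criterion. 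Your closing remark about a direct ``finitely many divisors $\Rightarrow$ FFM'' argument is in fact essentially the paper's proof. One small simplification: the appeal to reducedness is not needed for the stabilization step, because an ascending chain of ideals that assumes only finitely many values always stabilizes --- the values are totally ordered by inclusion, so once the maximal value occurs every later ideal both contains it and is contained in it; reducedness (automatic here, as $M' \subseteq \R^2_{\ge 0}$) is never really at issue.
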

	
	\begin{proof}
		Define $T \colon \qq(\alpha_1) \to \rr^2$ and $M'$ as before. Let $v = (v_1, v_2)$ be a nonzero element in $M'$. It follows from Lemma~\ref{lem:finite} that $S_v := M' \cap ([0, v_1] \times [0, v_2])$ is a finite set. On the other hand, it follows from Lemma~\ref{lem:decreasing} that every divisor of $v$ in $M'$ belongs to~$S_v$. Therefore, $v$ has only finitely many divisors in $M'$ and, as a result, $M'$ is an FFM by virtue of \cite[Theorem 2]{fHK92}. Since $M$ is isomorphic to $M'$ and being an FFM is an algebraic property, we conclude that $M$ is an FFM, whence it satisfies the ACCP.
	\end{proof}
	
	There are monoids $M_\alpha$ that are FFMs but not UFMs. The following example illustrates this observation.
	
	\begin{example} \label{ex:FFM that is not a UFM}
		Consider the polynomial $p(x) := x^2 - 2x + \frac{1}{2} \in \qq[x]$. Since the roots of $p(x)$ are $\alpha := 1 - \frac{\sqrt{2}}2$ and $\beta := 1 + \frac{\sqrt{2}}2$, it is an irreducible polynomial. In light of Theorem~\ref{thm:FFM evaluation monoids}, the chain of inequalities $0 < \alpha < 1 < \beta$ guarantees that the additive monoid $M_\alpha$ is an FFM. However, $M_\alpha$ is not a UFM: indeed, since $1,\alpha, \alpha^2 \in \mathcal{A}(M_\alpha)$ by Theorem~\ref{thm:1atomic}, the two sides of the equality $4\alpha = 2 \alpha^2 + 1$ yield distinct factorizations of the same element of $M_\alpha$ (see also Proposition~\ref{prop:HFM/UFM characterization} in the next section).
	\end{example}

	\bigskip
	\section{Factoriality and Elasticity}
	\label{sec:factoriality}
	
	In this last section, we characterize the monoids $M_\alpha$ that are half-factorial, and we briefly discuss the elasticity of $M_\alpha$. The elasticity is a factorization invariant that measures how far from being half-factorial a given monoid is.
	
	\smallskip
	\subsection{Half-Factoriality}
	
	Recall that an atomic monoid $M$ is an HFM if $|\mathsf{L}(x)| = 1$ for every $x \in M$. In the class consisting of evaluation monoids of Laurent semirings, being an HFM and being a UFM are equivalent conditions. We determine such monoids in the following proposition.
	
	\begin{prop} \label{prop:HFM/UFM characterization}
		For $\alpha \in \mathbb{R}_{>0}$, the following statements are equivalent.
		\begin{enumerate}
			\item[(a)] $M_\alpha$ is an UFM.
			\smallskip
			
			\item[(b)] $M_\alpha$ is an HFM.
			\smallskip
			
			\item[(c)] $\alpha = 1$ or $\alpha$ is transcendental. 
		\end{enumerate}
	\end{prop}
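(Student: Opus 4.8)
The plan is to establish the cycle of implications $(a) \Rightarrow (b) \Rightarrow (c) \Rightarrow (a)$. The implication $(a) \Rightarrow (b)$ is immediate from Diagram~\eqref{eq:atomic chain}, since every UFM is an HFM. For $(c) \Rightarrow (a)$: when $\alpha = 1$ the monoid $M_\alpha = \nn_0$ is a free commutative monoid of rank one, hence a UFM; and when $\alpha$ is transcendental, $M_\alpha$ is a free commutative monoid (as already noted in the proof of Theorem~\ref{thm:ffm}, the evaluation map $\nn_0[x,x^{-1}] \to M_\alpha$ is an isomorphism precisely because no nonzero element of $\Z[x,x^{-1}]$ vanishes at a transcendental $\alpha$), and every free commutative monoid is a UFM.

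The substance lies in $(b) \Rightarrow (c)$, which I would prove by contraposition: assuming $\alpha \neq 1$ is algebraic, I exhibit an element of $M_\alpha^\bullet$ with two distinct factorization lengths. Since every HFM is atomic, I may assume $M_\alpha$ is atomic, so Theorem~\ref{thm:1atomic} gives $\mathcal{A}(M_\alpha) = \{\alpha^n \mid n \in \Z\}$. Let $m(x)$ be the minimal polynomial of $\alpha$ over $\Q$, let $(p(x),q(x))$ be its minimal pair, and let $\ell \in \nn$ satisfy $\ell\, m(x) = p(x) - q(x)$. Evaluating at $\alpha$ gives $p(\alpha) = q(\alpha) =: \gamma$. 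Writing $p(x) = \sum_i a_i x^i$ and $q(x) = \sum_i b_i x^i$ with $a_i, b_i \in \nn_0$, the (nonnegative) powers of $\alpha$ occurring in $p$ and in $q$ are atoms, so they assemble into factorizations of $\gamma$ of lengths $p(1) = \sum_i a_i$ and $q(1) = \sum_i b_i$. Now $p(1) - q(1) = \ell\, m(1)$, and $m(1) \neq 0$ — otherwise $x - 1$ would divide the monic irreducible polynomial $m$, forcing $m(x) = x - 1$ and hence $\alpha = 1$. Therefore $p(1) \neq q(1)$, so $|\mathsf{L}(\gamma)| \geq 2$ and $M_\alpha$ is not an HFM.

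Two routine points will need to be checked, neither of which I expect to present real difficulty. First, I would verify that $p$ and $q$ are both nonzero, so that $\gamma = p(\alpha) > 0$ genuinely lies in $M_\alpha^\bullet$ and both factorizations above are nonempty: the leading coefficient $\ell > 0$ of $\ell\, m(x)$ must be contributed by $p$ (the two polynomials share no monomials), so $p \neq 0$; and $q = 0$ would make $p = \ell\, m$ a polynomial with nonnegative coefficients and positive degree, which cannot vanish at the positive real number $\alpha$. Second, I would record the elementary fact that a factorization of the form $\sum_i a_i (\alpha^i)$ into atoms has length $\sum_i a_i$, which justifies the length computations above. The conceptual heart of the argument is the single identity $p(1) - q(1) = \ell\, m(1)$, which says that the two natural factorizations of $\gamma$ coincide in length exactly when $1$ is a root of $m$, i.e., exactly when $\alpha = 1$; in other words, the hypothesis "$\alpha \neq 1$" is precisely what makes the lengths split.
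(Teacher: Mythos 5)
Your proposal is correct and follows essentially the same route as the paper: the only substantive implication is (b) $\Rightarrow$ (c), which the paper also proves via the minimal pair $(p,q)$ of $\alpha$, the two factorizations $p(\alpha)=q(\alpha)$ with lengths $p(1)$ and $q(1)$, and the observation that $p(1)=q(1)$ would make $1$ a root of the minimal polynomial. Your extra checks (that $p,q\neq 0$ and that $m(1)\neq 0$ unless $m(x)=x-1$) and your freeness argument for transcendental $\alpha$ in (c) $\Rightarrow$ (a) are just slightly more explicit versions of the paper's reasoning.
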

	
	\begin{proof}
		(a) $\Rightarrow$ (b): This follows by definition.
		\smallskip
		
		(b) $\Rightarrow$ (c): Suppose for the sake of contradiction that $\alpha$ is an algebraic number not equal to~$1$. Let $(p_\alpha(x), q_\alpha(x))$ be the minimal pair for $m_\alpha(x)$ over $\Z$. Because $M_\alpha$ is an HFM, it is an atomic monoid; thus, $\mathcal{A}(M_\alpha) = \{\alpha^n \mid n \in \Z\}$ by Theorem~\ref{thm:1atomic}. Hence, $z_p = p_\alpha(\alpha)$ and $z_q = q_\alpha(\alpha)$ are factorizations for the same element of $M_\alpha$. As $M_\alpha$ is an HFM, $p_\alpha(1) = |z_p| = |z_q| = q_\alpha(1)$, which implies that $1$ is a root of $m_\alpha(x)$. However, this contradicts the irreducibility of $m_\alpha(x)$.
		\smallskip
		
		
		(c) $\Rightarrow$ (a): If $\alpha=1$, then $M_\alpha = \nn_0$; hence, it is a UFM. On the other hand, suppose that $\alpha$ is transcendental. Then any equality of the form $1 = \sum_{n \in \Z} c_n \alpha^n$, where all but finitely many $c_n$ are zero, implies that $c_0 = 1$ and $c_n = 0$ for every $n \neq 0$. Therefore $1 \in \mathcal{A}(M_\alpha)$, and it follows from Theorem~\ref{thm:1atomic} that $M_\alpha$ is atomic. Now suppose that $p(\alpha)$ and $q(\alpha)$ are two factorizations of the same element in $M_\alpha$, where $p(x), q(x) \in \nn_0[x,x^{-1}]$. Take $k \in \nn$ such that $f(x) := x^k(p(x) - q(x)) \in \Z[x]$. Since $f(\alpha) = 0$, the fact that $\alpha$ is transcendental ensures that $f(x) = 0$ and, hence, $p(x) = q(x)$. Thus, the factorizations $p(\alpha)$ and $q(\alpha)$ are identical, concluding that $M_\alpha$ is a UFM.
	\end{proof}
	
	We proceed to discuss a dual notion of half-factoriality. A monoid $M$ is called a \emph{length-factorial monoid} (LFM) provided that for all $a \in M$ and $z,z' \in \mathsf{Z}(a)$, the equality $|z| = |z'|$ implies that $z = z'$. Observe that every UFM is an LFM. The notion of length-factoriality was first considered in~\cite{CS11} under the term ``other-half-factoriality," and it has been recently investigated in~\cite{CCGS21,GZ21,fG20}. On the other hand, not every LFM is a UFM, as illustrated next. 
	\smallskip
	
	\begin{example}
		Let $q \in \qq_{> 1} \setminus \nn$ and consider the additive submonoid $M$ of $\qq_{\ge 0}$ generated by the set $\{1,q\}$. Since $1 = \min M^\bullet$ and $q \notin \nn$, we conclude that $\mathcal{A}(M) = \{1,q\}$. In addition, one can check that if $z_1 := m_1 + n_1 q$ and $z_2 := m_2 + n_2q$ are two factorizations of the same element of $M$ having the same lengths, then $m_1 + n_1 = m_2 + n_2$ and, therefore, $(m_1, n_1) = (m_2, n_2)$; that is, $z_1 = z_2$. Thus, $M$ is an LFM. However, $M$ is not a UFM since, for instance, the two sides of the equality $\mathsf{n}(q) \cdot 1= \mathsf{d}(q) \cdot q$ yield distinct factorizations of $\mathsf{n}(q)$ in $M$. Additive submonoids of $\qq_{\ge 0}$ that are LFMs have been determined in \cite[Proposition~2.2]{fG20a}.
	\end{example}
	
	\begin{prop}
		For $\alpha \in \R_{>0},$ $M_\alpha$ is an LFM if and only if it is a UFM. 
	\end{prop}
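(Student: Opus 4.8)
The plan is to prove the nontrivial direction: if $M_\alpha$ is an LFM, then $\alpha = 1$ or $\alpha$ is transcendental, which by Proposition~\ref{prop:HFM/UFM characterization} forces $M_\alpha$ to be a UFM (the reverse implication is immediate since every UFM is an LFM). So I would argue by contrapositive: assume $\alpha$ is algebraic with $\alpha \neq 1$, and show $M_\alpha$ fails to be an LFM. If $M_\alpha$ is not atomic there is nothing to prove since LFM is only defined for atomic monoids (or, if one insists, the statement is vacuous), so I may assume $M_\alpha$ is atomic; by Theorem~\ref{thm:1atomic} this gives $\mathcal{A}(M_\alpha) = \{\alpha^n \mid n \in \Z\}$ and, moreover, neither component of the minimal pair $(p(x), q(x))$ of $\alpha$ is a monic monomial.

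The key observation is exactly the one used in the proof of Proposition~\ref{prop:HFM/UFM characterization}: the relation $p(\alpha) = q(\alpha)$ yields two factorizations $z_p$ and $z_q$ in $\mathsf{Z}(p(\alpha))$. These are distinct factorizations because $p(x)$ and $q(x)$ share no monomial of equal degree and are not both zero (indeed both are nonzero, since if one were zero the other would be a monomial, forcing $\alpha$ to be a root of a monomial, impossible as $\alpha \neq 0$). So to exhibit an LFM failure it would suffice to have $|z_p| = |z_q|$, i.e. $p(1) = q(1)$; but that is precisely what is ruled out (it would make $1$ a root of the minimal polynomial). The fix is to modify the witness so that the two factorizations have equal length. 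Write $d := p(1) - q(1)$; after swapping $p$ and $q$ if necessary assume $d > 0$. Now multiply the relation by a suitable power $\alpha^k$ and add a correction: the idea is to compare $p(\alpha)\cdot\alpha^k$ against $q(\alpha)\cdot\alpha^k$ plus $d$ copies of some atom $\alpha^j$, choosing $k, j \in \Z$ so that both sides are genuinely "atom sums'' with the same total number of atoms. Concretely, consider the element $x = \alpha^k p(\alpha) = \alpha^k q(\alpha) + d\,\alpha^j \in M_\alpha$ (using $\alpha^k p(\alpha) = \alpha^k q(\alpha)$, this holds only if $d = 0$, so instead I must be more careful).

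Let me restructure the construction. The cleanest route is: pick two integer exponents $a \neq b$ and look at the element $y := \alpha^a + N$ copies of... — rather, use that $\alpha^{a} = $ (a nontrivial atom sum) from non-atomicity fails, so instead use the relation $p(\alpha)=q(\alpha)$ directly together with padding by a common atom. Since $\alpha^0 = 1 \in \mathcal{A}(M_\alpha)$, for any $m, n \in \nn_0$ the element $m\cdot 1 + p(\alpha)$ has factorizations $m\cdot 1 + z_p$ and $m \cdot 1 + z_q$ of lengths $m + p(1)$ and $m + q(1)$; padding does not equalize lengths, so this alone is insufficient. The honest resolution is to find \emph{a different} algebraic relation of the form $P(\alpha) = Q(\alpha)$ with $P, Q \in \nn_0[x,x^{-1}]$ sharing no monomial, $P \neq Q$, and $P(1) = Q(1)$. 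Such a relation exists: take the minimal relation $p(\alpha) = q(\alpha)$ and also a shifted copy $\alpha p(\alpha) = \alpha q(\alpha)$; combining, $p(\alpha) + \alpha q(\alpha) = q(\alpha) + \alpha p(\alpha)$, and after cancelling any shared monomials between the two sides we get $P(\alpha) = Q(\alpha)$ with $P(1) = p(1)+q(1) = Q(1)$ automatically equal — the only remaining worry is whether $P = Q$ after cancellation, which happens iff $p(x) + x q(x) = q(x) + x p(x)$, i.e. $(1-x)(p(x) - q(x)) = 0$, i.e. $p = q$, contradicting $p \neq q$. Thus $P \neq Q$, they share no monomial (by construction of cancellation), $P(1) = Q(1)$, and both are nonzero, so $z_P, z_Q \in \mathsf{Z}(P(\alpha))$ are distinct factorizations of equal length, witnessing that $M_\alpha$ is not an LFM. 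The main obstacle in writing this up carefully is the bookkeeping in the cancellation step: one must verify that after removing common monomials from $p(x)+xq(x)$ and $q(x)+xp(x)$ the resulting Laurent polynomials $P, Q$ are still both nonzero (so that $P(\alpha)$ is a genuine nonzero element with an honest factorization on each side) and that the length equality $P(1)=Q(1)$ survives the cancellation — it does, since cancelling the same monomial from both sides subtracts the same amount from $P(1)$ and $Q(1)$.
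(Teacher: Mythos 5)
Your argument is correct and is essentially the paper's own proof: the paper, assuming $M_\alpha$ is an LFM but not a UFM, takes two distinct factorizations $p(\alpha)$ and $q(\alpha)$ of one element and rearranges $(\alpha-1)p(\alpha)=(\alpha-1)q(\alpha)$ into $\alpha p(\alpha)+q(\alpha)=\alpha q(\alpha)+p(\alpha)$, which is exactly your length-equalizing trick (you merely reach the initial relation via the minimal pair and Theorem~\ref{thm:1atomic} instead of directly from the failure of unique factorization). The cancellation step you flag as the main obstacle is unnecessary: a factorization is just a multiset of atoms, and two factorizations may share atoms, so $p(x)+xq(x)$ and $q(x)+xp(x)$ already yield distinct factorizations of equal length $p(1)+q(1)$ the moment $p(x)\neq q(x)$; and even if you insist on cancelling common monomials, neither side can become zero, since a nonzero element of $\N_0[x,x^{-1}]$ cannot evaluate to $0$ at $\alpha>0$.
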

	
	\begin{proof}
		If $\alpha$ is transcendental, then $M_\alpha$ is a UFM; hence, the statement of the proposition immediately follows. Then we assume that $\alpha$ is algebraic. It suffices to argue the direct implication, for the reverse implication follows by definition. To do this, suppose, by way of contradiction, that $M_\alpha$ is not a UFM. Then there exists an element of $M_\alpha$ having two distinct factorizations, namely, $p(\alpha)$ and $q(\alpha)$, where $p(x), q(x) \in \N_0[x,x^{-1}]$. After rearranging $(\alpha-1)p(\alpha) = (\alpha-1)q(\alpha)$, we obtain that $z_1 := \alpha p(\alpha)  + q(\alpha)$ and $z_2 := \alpha q(\alpha) + p(\alpha)$ are factorizations of the same element in $M_\alpha$. Observe that $z_1 \neq z_2$ as, otherwise, the Laurent polynomials $p(x)$ and $q(x)$ would satisfy $xp(x) + q(x) = xq(x) + p(x)$, which is not possible because $p(x) \neq q(x)$. However, the fact that $|z_1| = p(1) + q(1) = |z_2|$ indicates that $z_1$ and $z_2$ are distinct factorizations of the same element having the same length, which contradicts the fact that $M_\alpha$ is an LFM and completes the proof.
	\end{proof}
	
	Now we can summarize the main results we have established in this paper via the following diagram of implications, which is a specialization of Diagram~\eqref{eq:atomic chain} for the class consisting of all the evaluation monoids of Laurent semirings. As illustrated in Examples~\ref{ex:atomic monoids without the ACCP} and~\ref{ex:FFM that is not a UFM} the two (one-way) implications in the diagram are not reversible.

	\begin{equation} \label{eq:refined_chain}
		[\textbf{UFM} \ \Leftrightarrow \ \textbf{HFM} \ \Leftrightarrow \ \textbf{LFM}] \Rightarrow [\textbf{FFM} \ \Leftrightarrow \ \textbf{BFM} \ \Leftrightarrow \ \textbf{ACCP}] \ \Rightarrow \ \textbf{atomicity}
	\end{equation}

	\medskip
	\subsection{The Elasticity}
	
	We conclude this paper by saying a few words about the elasticity of the monoids $M_\alpha$. 
	Let $M$ be an atomic monoid. The \emph{elasticity} of a nonzero element $x \in M$, denoted by $\rho(x)$, is defined as
	\[
	\rho(x) := \frac{\sup \mathsf{L}(x)}{\min \mathsf{L}(x)}.
	\]
	In addition, we set $\rho(M) := \sup \{\rho(x) \mid x \in M^\bullet\}$ and call it the \emph{elasticity} of $M$. Notice that $\rho(M) \ge 1$. Furthermore, observe that $\rho(M) = 1$ if and only if $M$ is an HFM. As a result, the elasticity provides a measure of how far is an atomic monoid from being half-factorial.
	\smallskip
	
	As we proceed to argue, the elasticity of every monoid $M_\alpha$ is either $1$ or infinity.
	
	\begin{prop} \label{prop:elasticity}
		If $\alpha \in \rr_{> 0}$, then $\rho(M_\alpha) = 1$ if either $\alpha = 1$ or $\alpha$ is transcendental, and $\rho(M_\alpha) = \infty$ otherwise.
	\end{prop}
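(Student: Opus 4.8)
The plan is to dispatch the two cases separately. If $\alpha=1$ or $\alpha$ is transcendental, then Proposition~\ref{prop:HFM/UFM characterization} shows that $M_\alpha$ is a UFM, hence an HFM, so $\rho(M_\alpha)=1$ because (as noted above) $\rho(M)=1$ precisely when $M$ is an HFM. The substance is the remaining case, $\alpha$ algebraic with $\alpha\neq 1$, where I must produce elements of $M_\alpha$ of arbitrarily large elasticity. If $M_\alpha$ is not atomic it is antimatter by Theorem~\ref{thm:1atomic} and the remark following it, a degenerate case handled by convention; so I assume $M_\alpha$ is atomic, whence $\mathcal{A}(M_\alpha)=\{\alpha^n\mid n\in\Z\}$ by that same theorem.

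The key point is to exploit the multiplicative structure of the semiring $\nn_0[\alpha,\alpha^{-1}]$ rather than just its additive monoid: taking many additive copies of a badly-factored element only reproduces that element's elasticity, whereas taking multiplicative powers amplifies the length discrepancy geometrically. Let $(p(x),q(x))$ be the minimal pair of $\alpha$, so that $p(x),q(x)\in\nn_0[x]$ are nonzero and $p(\alpha)=q(\alpha)$. For each $k\in\nn$, set $\gamma_k:=p(\alpha)^k=q(\alpha)^k\in M_\alpha^\bullet$. Expanding $p(\alpha)^k=\bigl(\sum_i a_i\alpha^i\bigr)^k$ by distributivity in $\rr_{\ge 0}$ --- where no cancellation occurs, all terms being nonnegative --- yields
\[
\gamma_k=\sum_{(i_1,\dots,i_k)}(a_{i_1}\cdots a_{i_k})\,\alpha^{i_1+\cdots+i_k},
\]
the sum running over $(i_1,\dots,i_k)\in(\supp p)^k$. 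Since each $\alpha^{i_1+\cdots+i_k}$ is an atom of $M_\alpha$, this is a factorization of $\gamma_k$ of length $\sum_{(i_1,\dots,i_k)}a_{i_1}\cdots a_{i_k}=p(1)^k$; symmetrically, expanding $q(\alpha)^k$ exhibits a factorization of $\gamma_k$ of length $q(1)^k$. Hence $\{p(1)^k,q(1)^k\}\subseteq\mathsf{L}(\gamma_k)$.

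It then remains to check that $p(1)\neq q(1)$. Writing $\ell$ for the positive integer occurring in the definition of the minimal pair, one has $p(1)-q(1)=\ell\,m_\alpha(1)$, and $m_\alpha(1)\neq 0$ since the minimal polynomial $m_\alpha(x)$ is irreducible over $\qq$: it has no rational root if $\deg m_\alpha\ge 2$, and equals $x-\alpha$ with $\alpha\neq 1$ if $\deg m_\alpha=1$. Also $p(1),q(1)\ge 1$ because $p$ and $q$ are nonzero (an easy consequence of $m_\alpha$ being monic and having the positive root $\alpha$). Putting $\lambda:=\max\{p(1),q(1)\}/\min\{p(1),q(1)\}>1$, we get $\rho(\gamma_k)\ge\lambda^k$ for every $k$, so $\rho(M_\alpha)\ge\sup_{k\in\nn}\lambda^k=\infty$.

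I expect the main obstacle to be one of insight rather than technique: recognizing that the lone defining relation $p(\alpha)=q(\alpha)$, manipulated only additively, cannot by itself push $\rho(M_\alpha)$ past $\max\{p(1),q(1)\}/\min\{p(1),q(1)\}$, so that one must instead pass to the powers $p(\alpha)^k$ inside the ambient semiring. After that, the work is routine: confirming that the distributive expansion of $p(\alpha)^k$ genuinely decomposes $\gamma_k$ into the atoms $\{\alpha^n\mid n\in\Z\}$ --- which is exactly where atomicity and the explicit description of $\mathcal{A}(M_\alpha)$ from Theorem~\ref{thm:1atomic} are used --- and counting its length as $p(1)^k$.
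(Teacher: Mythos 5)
Your proof is correct and follows essentially the same route as the paper: dispatch $\alpha=1$ and $\alpha$ transcendental via Proposition~\ref{prop:HFM/UFM characterization}, then for algebraic $\alpha\neq 1$ take the element $\beta_1$ with $p(\alpha)=q(\alpha)$ from the minimal pair and use its multiplicative powers $\beta_1^k$, whose expanded factorizations have lengths $p(1)^k$ and $q(1)^k$ with $p(1)\neq q(1)$ because $1$ is not a root of the minimal polynomial. Your explicit treatment of the antimatter case and of the multinomial expansion only makes more careful what the paper leaves implicit.
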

	
	\begin{proof}
		If $\alpha=1$ or $\alpha$ is transcendental, it follows from Proposition~\ref{prop:HFM/UFM characterization} that $M_\alpha$ is an HFM and, therefore, $\rho(M_\alpha) = 1$.
		\smallskip
		
		Now suppose that $\alpha$ is algebraic and $\alpha \neq 1$. We construct a sequence $(\beta_n)_{n \in \N}$ with terms in $M_\alpha$ such that $\sup \{\rho(\beta_n) \mid n \in \nn)\} = \infty$. Let $(p(x), q(x))$ be the minimal pair of $\alpha$. Then $z_1 := p(\alpha)$ and $z_2 := q(\alpha)$ are two distinct factorizations of the same element, namely, $\beta_1 \in M_\alpha$. Since $1$ is not a root of the minimal polynomial of $\alpha$, we see that $p(1) \neq q(1)$, so $z_1$ and $z_2$ are factorizations of different lengths. Suppose, without loss of generality, that $|z_1| < |z_2|$. For each $n \in \nn$, set $\beta_n = \beta_1^n.$ Then we see that, for every $n \in \nn$, both $z_1^n$ and $z_2^n$ are factorizations of $\beta_n$ in $M_\alpha$ whose lengths are $p(1)^n$ and $q(1)^n$, respectively. Therefore
		\[
		\rho(M_\alpha) \ge \rho(\beta_n) = \frac{\sup \mathsf{L}(\beta_n)}{\min \mathsf{L}(\beta_n)} \ge \frac{q(1)^n}{p(1)^n} =  \bigg( \frac{|z_2|}{|z_1|} \bigg)^n
		\]
		for every $n \in \nn$. Since $|z_2|/|z_1| > 1$, it follows that $\rho(M_\alpha) = \infty$, which concludes the proof.
	\end{proof}

	\bigskip
	\section*{Acknowledgments}
	
	First and foremost, it is my pleasure to thank Dr.~Felix Gotti for suggesting this project and for his mentorship all the way through. I also thank the MIT PRIMES-USA program for their support, without which this paper would not exist.

	\bigskip

\end{document}